\documentclass[11pt,a4paper]{article}

\usepackage{cmbright}
\usepackage{microtype}
\usepackage[T1]{fontenc}
\usepackage{lmodern}
\usepackage[utf8]{inputenc}

\usepackage{amsfonts}
\usepackage{amsmath}
\usepackage{amssymb}
\usepackage{amsthm}
\usepackage{cite}
\usepackage{hyperref}
\usepackage[usenames]{color}
\usepackage{tikz}
\usetikzlibrary{
	calc,
	intersections,
	through,
	backgrounds,
	arrows
}

\definecolor{darkgreen}{rgb}{0,0.4,0}
\definecolor{BrickRed}{rgb}{0.65,0.08,0}
\hypersetup{colorlinks=true,linkcolor=blue,citecolor=red,filecolor=BrickRed,urlcolor=darkgreen}

\theoremstyle{definition}
\newtheorem{theorem}{Theorem}[section]
\newtheorem{lemma}[theorem]{Lemma}
\newtheorem{proposition}[theorem]{Proposition}
\newtheorem{corollary}[theorem]{Corollary}
\newtheorem{definition}[theorem]{Definition}

\theoremstyle{remark}

\numberwithin{equation}{section}

\def\Nar[#1][#2]{
  N(#1,#2) 
}

\def\G{\mathcal{G}}

\def\P{{P}}

\def\e{\mathbf{e}}
\def\x{\mathbf{x}}
\def\y{\mathbf{y}}

\title{Brick Wall Excursions:\\ Combinatorial Interpretation of Random Flight Moments}

\usepackage[affil-it]{authblk}
\author[$a$]{Sergey \textsc{Kirgizov}}
\author[$a,b$]{Khaydar \textsc{Nurligareev}}
\author[$c$]{Michael \textsc{Wallner}}

\affil[$a$]{Université Bourgogne Europe, LIB UR 7534, F-21000 Dijon, France}
\affil[$b$]{Sorbonne Universit\'e, CNRS, LIP6 UMR 7606, F-75005 Paris, France}
\affil[$c$]{TU Wien, Austria}

\date{}

\begin{document}
\maketitle
\begin{center}
%{\LARGE Brick wall walks}

%\
%
%{\Large Sergey Kirgizov}\\
%LIB, Université de Bourgogne, Dijon, France
%\
%
%{\Large Khaydar Nurligareev}\\
%LIP6, Sorbonne Université, Paris, France
%\
%
%{\Large Michael Wallner}\\
%TU Wien, Vienna, Austria
\end{center}
\begin{abstract}
  We study the expected distance of short uniform random walks in arbitrary dimensions with unit steps in random directions.
  It is known that for dimensions $d=2$ and $d=4$, all the moments of an $m$-step walk are integer.
  While for $d=2$, the $n$th moment can be interpreted as the number of abelian squares of length $2n$ over an alphabet with $m$ letters, for $d=4$ no interpretation was known.

  The goal of this paper is to provide such an interpretation, both for $d=2$ and $d=4$, in terms of $2n$-step lattice paths in dimension $m-1$.
  Our construction relies on a bijection between Dyck paths with a prescribed number of peaks and words of a certain type.
  In addition, this bijection allows us to derive closed formulas for the number of lattice paths provided with certain statistics.
\end{abstract}

%  Basical use of the terms proposed by Michael:
%  - paths are the generic object;
%  - walks are unconstrained paths;
%  - excursions are paths returning to the origin and staying inside of some domain;
%  - bridges are paths that have the same ending and starting point, but are unconstrained. 
%  In our context, we call:
%  - walks the objects from the (initial) physical model;
%  - paths (or lattice paths) the combinatorial objects.

\section{Introduction}
\label{sec:introduction}

  In this paper, we consider short uniform random walks in higher dimensions, also known in the literature as \emph{random flights}.
  Following \cite{BorweinStraubVignat2016}, we fix two positive integers $d$ and $m$ and consider a uniform random walk in $\mathbb{R}^d$ that starts at the origin and consists of $m$ independent steps of length $1$, where the direction of each step is chosen uniformly at random.
  Let us denote by $p_m(\nu;x)$ the probability density function of the distance to the origin after $m$ such steps (also called the \emph{end-to-end distance}), and
  \begin{equation}\label{eq:moments-def}  
    W_m(\nu;s) = \int\limits_0^\infty x^s\, p_m(\nu;x)\, dx
  \end{equation}
  the associated moment functions.
  Here, the parameter $\nu$ is defined as
  \[
    \nu = \dfrac{d}{2} - 1
    \, .
  \]
  Obviously, for $m=0$ steps we have $W_0(\nu,s)=0$, since the walk stays at the origin, and for $m=1$ steps we have $W_1(\nu,s)=1$, since every possible step ends in the unit sphere.
  The interesting question now is: what happens for $m \geqslant 2$ steps?

  This question has a long history.
  One of its first appearances in the case where $d=2$ dates back to 1905 due to Pearson~\cite{Pearson1905} who was interested in the probability that the end-to-end distance belongs to an interval $(r,r + \delta r)$ in connection with mosquito migrations~\cite{Pearson1906}. 
  The first approach to solving the problem was proposed by Rayleigh~\cite{Rayleigh1905} and, more in depth, by Kluyver~\cite{Kluyver1906}.
  For $d=3$, the random walk under consideration was first extensively studied by Rayleigh~\cite{Rayleigh1919} a decade later.
  Pearson's initial question was also studied for any dimension in the case where the steps are not equal; see~\cite[§13.48]{Watson1966}.
  Beyond migration theory, short uniform random walks have found applications in a wide range of fields, such as, for example, the theory of elasticity for rubber-like materials~\cite{Treloar1946} and the analysis of polymer chain configurations~\cite{Yamakawa1971}.
  The reader can find a detailed review supplied with an extensive list of references in~\cite[Chapter~2]{Hughes1995}.

  The focus of many studies is on the distribution of the end-to-end distance, including its approximations for numerical purposes.
  The principal tools used for this goal are infinite integrals, Fourier transforms, and Bessel functions.
  In some cases, the cumbersome expressions given with the help of these tools simplify to nice and elegant forms.
  As an example of such behavior, we would like to mention Rayleigh’s theorem: the probability that the end-to-end distance is less than $1$ after $m$ steps is $1/(m+1)$; see~\cite{Bernardi2013}.
  Another fact worth mentioning is that odd dimensional results are resolvable in terms of elementary functions~\cite{Garcia-Pelayo2012,BorweinSinnamon2016}.

  The starting point of our study is also the result of such a simplification, which is, in a sense, a byproduct of the research conducted by Borwein, Straub, and their coauthors~\cite{BorweinNuyensStraubWan2011, BorweinStraubWanZudilin2012, BorweinStraubWan2013, BorweinStraubVignat2016}.
  The primary goal of the mentioned papers is to study the analytic and number theoretic behavior of short (five steps or less) uniform random walks in the plane and in higher dimensions, and in particular to determine explicit closed forms for the moments $W_m(\nu;s)$ defined by~\eqref{eq:moments-def}.
  Thus, among other results, it is shown that even moments have the following matrix representation (see \cite[Example~2.23]{BorweinStraubVignat2016}).
  For a given half-integer $\nu$, we define by 
  \[
    A(\nu) = \Big(A_{ij}(\nu)\Big)_{i,j\geqslant0}
  \]
  the infinite lower triangular matrix with the entries
  \begin{equation}
    \label{eq:A(nu)matrix}
    A_{ij}(\nu) = \binom{i}{j} \dfrac{(i+\nu)!\, \nu!}{(i-j+\nu)!\, (j+\nu)!}
    \, .
  \end{equation}
  Here, for non-integer values $x$, we understand factorials in terms of the Gamma function: $x!=\Gamma(x+1)$.
  Then, for two non-negative integers $m$ and $n$, the moment $W_{m+1}(\nu;2n)$ is given by the sum of the entries in the $n$th row of the matrix $A(\nu)^{m}$.
  In particular, this shows the surprising fact that for $\nu=0$ and $\nu=1$ (that is, for $d=2$ and $d=4$, respectively), the matrix entries and, consequently, the moments are integers.
  
  The fact that, for any given $m\in\mathbb{Z}_{>0}$, the elements of the sequences $\big(W_m(0;2n)\big)_{n\geqslant0}$ and $\big(W_m(1;2n)\big)_{n\geqslant0}$ are integers suggests that they could have a combinatorial interpretation.
  For $\nu=0$, it is indeed known that the moments $W_m(0;2n)$ count abelian squares of length $2n$ over an alphabet with $m$ letters (\emph{abelian squares} are strings $xy$ such that $y$ is a permutation of $x$, see \cite{RichmondShallit2009}).
  However, the moments $W_m(1;2n)$, to the best of our knowledge, had no combinatorial interpretation prior to this work.

  The main goal of this paper is to provide such an interpretation.
  We do it in a common manner, both for $\nu=0$ and $\nu=1$,
  in terms of lattice paths on some infinite graphs (lattices).
  With a few exceptions including interpretations of $W_2(0;2n)$, $W_3(0;2n)$, $W_4(0;2n)$, and $W_2(1;2n)$, this result seems to be novel.

  For $\nu=0$ $(d=2)$, the construction is as follows.
  
\begin{definition}\label{def:G_m(0)}
  Let $m>0$ and $(\e_1,\ldots,\e_m)$ be the standard basis in $\mathbb{R}^m$.
  Introduce an $(m+1)$-regular infinite graph $\G_m(0)$ by the following conditions.
  \begin{itemize}
    \item
	   The set of vertices of the graph $\G_m(0)$ is $\mathbb{Z}^{m}$.
    \item
	   Each vertex $\x=(x_1,\ldots,x_m)$ is connected to $(m+1)$ other vertices, namely:
      \begin{itemize}
        \item
          to the vertices $\x+\e_1$ and $\x-\e_1$,
        \item
          to the vertices $\x+\e_k$ in the case where the sum $(x_1+\dots+x_m)$ is even ($k=2,\ldots,m$),
        \item
          to the vertices $\x-\e_k$ in the case where the sum $(x_1+\dots+x_m)$ is odd ($k=2,\ldots,m$).
      \end{itemize}
  \end{itemize}
  The graphs $\G_{m}(0)$ for $m=1,2,3$ are shown in Figure~\ref{fig:G_m(0)}.
\end{definition}

\begin{figure}[ht!]
\centering
\begin{tikzpicture}[line width=.5pt]
 \begin{scope}[scale=0.95, x={(1cm,0cm)}, y={(0.6cm,0.3cm)}, z={(0cm,1.2cm)}]
  \draw (-3.5,0.1,2.1) node {c)};
  \clip (-2.2,-1.5,-2.2) rectangle (2.2,1.5,2.2);
  \foreach \z in {-2,...,2}{
   \draw[dotted] (-2.5,1,\z) -- ++(5,0,0);
   \draw[dashed] (-2.5,0,\z) -- ++(5,0,0);
   \draw (-2.5,-1,\z) -- ++(5,0,0);
   \foreach \y in {-1,...,1}{
    \foreach \x in {-2,...,2}{
     \filldraw (\x,\y,\z) circle (1.5pt);
    }
   }
  }
  \foreach \z in {-2,0,2}{
   \foreach \x in {-2,0,2}{
    \draw (\x,-1,\z) -- ++(0,-0.5,0);
    \draw[dashed] (\x,0,\z) -- ++(0,1,0);
    \draw (\x,-1,\z) -- ++(0,0,-1);
    \draw[dashed] (\x,0,\z) -- ++(0,0,1);
    \draw[dotted] (\x,1,\z) -- ++(0,0,-1);
   }
   \foreach \x in {-1,1}{
    \draw (\x,-1,\z) -- ++(0,1,0);
    \draw[dotted] (\x,1,\z) -- ++(0,0.5,0);
    \draw (\x,-1,\z) -- ++(0,0,1);
    \draw[dashed] (\x,0,\z) -- ++(0,0,-1);
    \draw[dotted] (\x,1,\z) -- ++(0,0,1);
   }
  }
  \foreach \z in {-1,1}{
   \foreach \x in {-2,0,2}{
    \draw[dotted] (\x,1,\z) -- ++(0,0.5,0);
    \draw (\x,0,\z) -- ++(0,-1,0);
   }
   \foreach \x in {-1,1}{
    \draw (\x,-1,\z) -- ++(0,-0.5,0);
    \draw[dashed] (\x,0,\z) -- ++(0,1,0);
   }
  }
  \filldraw[white] (0,0,0) circle (3pt);
  \draw[line width=1.5pt] (0,0,0) circle (3pt);
 \end{scope}
 \begin{scope}[scale = 0.6, xshift = -400]
  \draw (-6,4) node {b)};
  \clip (-5.5,-4.5) rectangle (5.5,4.5);
  \foreach \x in {-5,...,5}
   \draw (\x,-5) -- ++(0,10);
  \foreach \x in {-3,...,2}{
   \foreach \y in {-2,...,2}{
    \draw (2*\x,2*\y) -- ++(1,0);
    \draw (2*\x+1,2*\y+1) -- ++(1,0);
    \filldraw (2*\x,2*\y) circle (2.5pt);
    \filldraw (2*\x+1,2*\y) circle (2.5pt);
    \filldraw (2*\x+1,2*\y+1) circle (2.5pt);
    \filldraw (2*\x+2,2*\y+1) circle (2.5pt);
   }
  }
  \filldraw[white] (0,0,0) circle (5pt);
  \draw[line width=1.5pt] (0,0,0) circle (5pt);
 \end{scope}
 \begin{scope}[scale = 0.6, xshift = -650]
  \draw (-1,4) node {a)};
  \draw (0,-4.5) -- ++(0,9);
  \foreach \y in {-4,...,4} \filldraw (0,\y) circle (2.5pt);
  \filldraw[white] (0,0,0) circle (5pt);
  \draw[line width=1.5pt] (0,0,0) circle (5pt);
 \end{scope}
\end{tikzpicture}
 \caption{The graph $\G_m(0)$:\quad a) for $m=1$,\quad b) for $m=2$,\quad c) for $m=3$.}
\label{fig:G_m(0)}
\end{figure}

  For $m=2$, this graph resembles a wall built of bricks.
  The same brick wall also appears in plane sections of the graph $\G_m(0)$ in the general case, namely, in sections parallel to the $x_1$-axis.
  That is why we informally refer to the paths on $\G_m(0)$ as the \emph{brick wall paths}.
  Note that the graph structure in the plane sections orthogonal to the $x_1$-axis resembles infinite $\Lambda$-shaped zigzags; see Figure~\ref{fig:G_m(0)}~c).

  In terms of brick wall paths, the combinatorial interpretation of the moments of the end-to-end distance of random flights in the plane is the following.

\begin{theorem}\label{thm:d=2_interpretation}
  For any two integers $m,n\in\mathbb{Z}_{>0}$, the moment $W_{m+1}(0;2n)$ is equal to the number of closed paths of length $2n$ on the graph $\G_m(0)$ which start and end at the origin.
\end{theorem}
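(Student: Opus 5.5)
The plan is a direct bijection. Closed paths of length $2n$ on $\G_m(0)$ will correspond to pairs of words $(x,y)$ of length $n$ over an alphabet of $m+1$ letters in which $y$ is a permutation of $x$. Such a pair is the same thing as the abelian square $xy$ of length $2n$ over $m+1$ letters. The number of these is
\[
\sum_{a_0+\dots+a_m=n}\binom{n}{a_0,\ldots,a_m}^{2},
\]
and by the known interpretation recalled in the introduction (see \cite{RichmondShallit2009,BorweinStraubVignat2016}) this equals $W_{m+1}(0;2n)$. So the whole proof reduces to building the bijection.

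\emph{Step 1: parity.} Every edge of $\G_m(0)$ changes the coordinate sum $x_1+\dots+x_m$ by exactly $\pm1$. A path from the origin is therefore at a vertex of even coordinate sum after an even number of steps, and at odd coordinate sum after an odd number of steps. Consequently the set of allowed moves depends only on the time index $t$, and not on the actual position:
\begin{itemize}
\item for even $t$, the allowed moves are $+\e_1,-\e_1,+\e_2,\ldots,+\e_m$;
\item for odd $t$, the allowed moves are $+\e_1,-\e_1,-\e_2,\ldots,-\e_m$.
\end{itemize}
Each list has $m+1$ entries, so any choice from the prescribed list at every time gives a valid path.

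\emph{Step 2: encoding.} We label the moves by letters in $\{0,1,\ldots,m\}$.
\begin{itemize}
\item At even times: $+\e_1\mapsto 0$, $-\e_1\mapsto 1$, and $+\e_k\mapsto k$ for $k\geqslant2$.
\item At odd times: $-\e_1\mapsto 0$, $+\e_1\mapsto 1$, and $-\e_k\mapsto k$ for $k\geqslant2$.
\end{itemize}
Let $x$ be the word read off at the $n$ even times and $y$ the word read off at the $n$ odd times. The map from paths of length $2n$ to pairs $(x,y)$ is clearly a bijection onto all pairs of length-$n$ words.

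\emph{Step 3: translating closedness.} Write $|w|_k$ for the number of occurrences of the letter $k$ in $w$.
\begin{itemize}
\item For $k\geqslant2$, the path returns to $0$ in coordinate $k$ exactly when $|x|_k=|y|_k$.
\item In coordinate $1$, return means $|x|_0+|y|_1=|x|_1+|y|_0$, that is, $|x|_0-|x|_1=|y|_0-|y|_1$.
\item Since $|x|=|y|=n$, the equalities for $k\geqslant2$ give $|x|_0+|x|_1=|y|_0+|y|_1$. Combined with the previous equation, this yields $|x|_0=|y|_0$ and $|x|_1=|y|_1$.
\end{itemize}
Conversely, if $x$ and $y$ have equal content, all of the above equations hold. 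Hence closed paths correspond exactly to pairs $(x,y)$ with $y$ a permutation of $x$, which are the abelian squares $xy$.

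The only delicate point is the choice of labelling in coordinate $1$. The roles of $0$ and $1$ must be swapped between even and odd times so that the coordinate-$1$ balance turns into equality of letter counts; after Step 3, the rest is bookkeeping.

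If one prefers not to rely on the abelian-square interpretation, the identity can be checked from the matrix representation instead. For $\nu=0$, formula~\eqref{eq:A(nu)matrix} gives $A_{ij}(0)=\binom{i}{j}^2$. Induction on $m$ then shows that the row sums of $A(0)^m$ equal $\sum\binom{n}{a_0,\ldots,a_m}^2$, by splitting off the last letter's multiplicity $a_m=n-j$, which contributes the factor $\binom{n}{j}^2$.
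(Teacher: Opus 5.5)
Your argument is correct. The underlying bijection is essentially the paper's, but the proof is organized differently.

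The paper starts from the definition, writing $W_{m+1}(0;2n)=\mathbb{E}\big(|A_0+\dots+A_m|^{2n}\big)$ with independent uniform unit complex numbers $A_k$. Using $\mathbb{E}(A_k^s)=0$ for $s\neq 0$, it reduces the moment to a constant term. It then reads the surviving monomials $A_{i_1}A_{j_1}^{-1}\cdots A_{i_n}A_{j_n}^{-1}$ as closed alternating paths on $(V_{m+1},E_{m+1})$ (Proposition~\ref{prop:d=2_pre-interpretation}, which is the abelian-square count in path form). Finally it transports these paths to $\G_m(0)$ via $\psi(x_0,\ldots,x_m)=(x_1-x_0,x_2,\ldots,x_m)$.

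You instead take the abelian-square formula (or the row-sum formula for $A(0)^m$) as a cited input and supply only the combinatorial half, at the level of words. Your encoding coincides with $\psi$ composed with the monomial-to-path correspondence, up to swapping the letters $0$ and $1$ (the symmetry $x_1\mapsto -x_1$ of $\G_m(0)$). Your observation that the allowed moves depend only on the parity of the time, together with the explicit coordinate-$1$ balance computation, is exactly the check that the paper compresses into one sentence asserting that $\psi$ maps $(V_{m+1},E_{m+1})$ onto $\G_m(0)$.

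So your route is more transparent on the graph side. The paper's route is self-contained on the analytic side and explains why the moments are constant terms, which is the paper's stated motivation; yours delegates that step to the literature.

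Your matrix-based variant reverses the paper's logic. The paper derives the matrix form (Corollary~\ref{cor:d=2_matrix_form}) from the theorem, while you deduce the theorem from the quoted matrix form, the same strategy the paper uses for $\nu=1$. Your induction, splitting off $a_m=n-j$ with the factor $\binom{n}{j}^2$, is correct.
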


  As we have already mentioned, the particular cases $m=1,2,3$ of Theorem~\ref{thm:d=2_interpretation} are known: the moments of $W_{m+1}(0;2n)$ are already interpreted as paths on certain lattices (see~\cite[Remark~4]{BorweinNuyensStraubWan2011}):
\begin{itemize}
  \item
    for $m=1$, these are paths on a line; see the sequence of central binomial coefficients \href{https://oeis.org/A000984}{\texttt{A000984}} in~\cite{oeis};
  \item
    for $m=2$, these are paths on the honeycomb lattice; see the sequence \href{https://oeis.org/A002893}{\texttt{A002893}} in~\cite{oeis};
  \item
    for $m=3$, these are paths on the diamond lattice; see the sequence of Domb numbers \href{https://oeis.org/A002895}{\texttt{A002895}} in~\cite{oeis}.
\end{itemize}
  At the same time, the combinatorial reason why these sequences appear in the context of moments $W_{m+1}(0;2n)$ is not explained in the works of Borwein, Straub, and their co-authors, nor, to the best of our knowledge, in any other articles.
  We fill this gap in Section~\ref{subsec:d=2}.
  Note that the graphs depicted in Figure~\ref{fig:G_m(0)}~b) and~c) are deformed versions of the honeycomb and diamond lattices, respectively, which are the same from the path counting point of view.
  Thus, Theorem~\ref{thm:d=2_interpretation} confirms the known counting results.

  For $\nu=1$ $(d=4)$, the construction is slightly different.

  \begin{definition}\label{def:G_m(1)}
  Let $m>0$ and $(\e_1,\ldots,\e_m)$ be the standard basis in $\mathbb{R}^m$.
  Introduce an infinite graph $\G_m(1)$ in the following way.
  \begin{itemize}
    \item
	   The set of vertices of the graph $\G_m(1)$ is $\mathbb{Z}_{\geqslant0}^m$.
    \item
	   Each vertex $\x=(x_1,\ldots,x_m)$ is connected to:
      \begin{itemize}
        \item
          the vertex $\x+\e_1$,
        \item
          the vertex $\x-\e_1$ in the case where $x_1>0$,
        \item
          the vertices $\x+\e_k$ in the case where the sum $(x_1+\ldots+x_k)$ is odd ($k=2,\ldots,m$),
        \item
          the vertices $\x-\e_k$ in the case where the sum $(x_1+\ldots+x_k)$ is even and $x_k>0$ ($k=2,\ldots,m$).
      \end{itemize}
  \end{itemize}
  The graphs $\G_{m}(1)$ for $m=1,2,3$ are shown in Figure~\ref{fig:G_m(1)}.
\end{definition}
  
\begin{figure}[ht!]
\centering
\begin{tikzpicture}[line width=.5pt]
 \begin{scope}[scale=1, x={(1cm,0cm)}, y={(0.6cm,0.3cm)}, z={(0cm,1.2cm)}]
  \draw (-0.5,0.1,3.5) node {c)};
%  \clip (-0.2,-0.2,-0.2) rectangle (4.5,2.4,3.1);
  \foreach \z in {0,...,3}{
   \draw[dotted] (0,2,\z) -- ++(4.5,0,0);
   \draw[dashed] (0,1,\z) -- ++(4.5,0,0);
   \draw (0,0,\z) -- ++(4.5,0,0);
   \foreach \y in {0,...,2}{
    \foreach \x in {0,...,4}{
     \filldraw (\x,\y,\z) circle (1.5pt);
    }
   }
  }
  \foreach \z in {0,...,3}{
   \foreach \x in {0,2,4}{
    \draw[dashed] (\x,1,\z) -- ++(0,1,0);
   }
   \foreach \x in {1,3}{
    \draw (\x,0,\z) -- ++(0,1,0);
    \draw[dotted] (\x,2,\z) -- ++(0,0.5,0);
   }
  }
  \foreach \z in {0,2}{
   \foreach \x in {0,2,4}{
    \draw[dashed] (\x,1,\z) -- ++(0,0,1);
   }
   \foreach \x in {1,3}{
    \draw (\x,0,\z) -- ++(0,0,1);
    \draw[dotted] (\x,2,\z) -- ++(0,0,1);
   }
  }
  \foreach \z in {1}{
   \foreach \x in {0,2,4}{
    \draw (\x,0,\z) -- ++(0,0,1);
    \draw[dotted] (\x,2,\z) -- ++(0,0,1);
   }
   \foreach \x in {1,3}{
    \draw[dashed] (\x,1,\z) -- ++(0,0,1);
   }
  }
  \foreach \z in {1,3}{
   \foreach \x in {0,2,4}{
    \draw (\x,0,\z) -- ++(0,0,0.7);
    \draw[dotted] (\x,2,\z) -- ++(0,0,0.5);
   }
   \foreach \x in {1,3}{
    \draw[dashed] (\x,1,\z) -- ++(0,0,0.6);
   }
  }
  \filldraw[white] (0,0,0) circle (3pt);
  \draw[line width=1.5pt] (0,0,0) circle (3pt);
 \end{scope}
 \begin{scope}[scale = 0.6, xshift = -380]
  \draw (-1,7) node {b)};
  \clip (-0.5,-0.5) rectangle (9.5,7.5);
  \foreach \x in {0,...,9}
   \draw (\x,0) -- ++(0,8);
  \foreach \x in {0,...,4}{
   \foreach \y in {0,...,4}{
    \draw (2*\x,2*\y+1) -- ++(1,0);
    \draw (2*\x+1,2*\y) -- ++(1,0);
    \filldraw (2*\x,2*\y+1) circle (2.5pt);
    \filldraw (2*\x+1,2*\y+1) circle (2.5pt);
    \filldraw (2*\x+1,2*\y) circle (2.5pt);
    \filldraw (2*\x+2,2*\y) circle (2.5pt);
   }
  }
  \foreach \y in {0,...,3}{
   \filldraw (0,2*\y) circle (2.5pt);
  }
  \filldraw[white] (0,0,0) circle (5pt);
  \draw[line width=1.5pt] (0,0,0) circle (5pt);
 \end{scope}
 \begin{scope}[scale = 0.6, xshift = -500]
  \draw (-1,7) node {a)};
  \draw (0,0) -- ++(0,7.5);
  \foreach \y in {1,...,7} \filldraw (0,\y) circle (2.5pt);
  \filldraw[white] (0,0,0) circle (5pt);
  \draw[line width=1.5pt] (0,0,0) circle (5pt);
 \end{scope}
\end{tikzpicture}
 \caption{The graph $\G_m(1)$:\quad a) for $m=1$,\quad b) for $m=2$,\quad c) for $m=3$.}
\label{fig:G_m(1)}
\end{figure}
  
  Being defined on a hyperoctant, the graph $\G_m(1)$ can be seen as a subgraph of an $(m+1)$-regular infinite graph $\hat{\G}_m(1)$ whose set of vertices is $\mathbb{Z}^m$.
  In particular, the degree of any of its vertices $\x\in\mathbb{Z}_{>0}$ is $m+1$.
  The degrees of the boundary vertices of $\G_m(1)$ vary from $1$ to $m+1$, with the minimum reached only at the origin.
  
  Similarly to the graph $\G_m(0)$, the plane sections of the graph $\hat{\G}_m(1)$ parallel to the $x_1$-axis are brick walls,
  and so we informally refer to the paths on $\G_m(1)$ that start and end at the origin as \emph{brick wall excursions}.
  In contrast, the plane sections of $\hat{\G}_m(1)$ orthogonal to the $x_1$-axis resemble infinite $\Pi$-shaped zigzags; see Figure~\ref{fig:G_m(1)}~c).
  The difference comes from the last conditions of Definitions~\ref{def:G_m(0)} and~\ref{def:G_m(1)}.
  For example, given a vertex $\x\in\G_3(0)$, its adjacent vertices along the axes $x_2$ and $x_3$ are determined only by the parity of the coordinate sum $x_1+x_2+x_3$.
  At the same time, one of the adjacent vertices of $\x\in\G_3(1)$ depends on the parity of $x_1+x_2$, while another depends on $x_1+x_2+x_3$.
  More generally, the structure of the adjacent edges of a vertex $\x\in\G_m(0)$ is one of two types: $(\e_1, -\e_1, \e_2, \ldots, \e_m)$ or $(\e_1, -\e_1, -\e_2, \ldots, -\e_m)$, while the structure of the adjacent edges of a vertex $\x\in\hat{\G}_m(1)$ is one of $2^{m-1}$ types and is determined by the choice of signs in $(\e_1, -\e_1, \pm\e_2, \ldots, \pm\e_m)$. 
  That is why $\hat{\G}_1(1) = \G_1(0)$ and $\hat{\G}_2(1) = \G_2(0)$, but $\G_m(1) \neq \G_m(0)$ for $m>2$.

  The four dimensional moments of the end-to-end distance of random flights have the following combinatorial interpretation in terms of brick wall excursions.

\begin{theorem}\label{thm:d=4_interpretation}
  For any two integers $m,n\in\mathbb{Z}_{>0}$, the moment $W_{m+1}(1;2n)$ is equal to the number of closed paths of length $2(n+1)$ on the graph $\G_m(1)$ which start and end at the origin.
\end{theorem}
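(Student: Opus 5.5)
The plan is to induct on $m$. The induction rests on the matrix representation recalled in the introduction, specialised to $\nu=1$, together with a projection argument on $\G_m(1)$.

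\textbf{The matrix entries are Narayana numbers.} For $\nu=1$, formula~\eqref{eq:A(nu)matrix} simplifies to
\[
  A_{ij}(1)=\binom{i}{j}\frac{(i+1)!}{(i-j+1)!\,(j+1)!}=\frac{1}{i+1}\binom{i+1}{j}\binom{i+1}{j+1}=N(i+1,j+1).
\]
This is the Narayana number counting Dyck paths of semilength $i+1$ with $j+1$ peaks. Let $\mathbf{1}$ be the all-ones vector and write $w^{(m)}_n$ for the $n$th row sum of $A(1)^m$. Since $A(1)^m\mathbf{1}=A(1)\,(A(1)^{m-1}\mathbf{1})$, we have
\[
  w^{(m)}_n=\sum_{j\le n}N(n+1,j+1)\,w^{(m-1)}_j .
\]
It therefore suffices to prove two things. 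First, the case $m=1$: closed paths of length $2(n+1)$ on the half-line $\G_1(1)$ are Dyck paths, so they number $C_{n+1}=\sum_j N(n+1,j+1)$, which matches $W_2(1;2n)$. Second, the closed paths of length $2(n+1)$ on $\G_m(1)$ satisfy the same recursion.

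\textbf{Projection step.} Delete the last coordinate. For $k<m$, the adjacency rule in direction $\e_k$ in Definition~\ref{def:G_m(1)} depends only on $x_1,\dots,x_k$. Hence erasing the $\pm\e_m$ steps of a closed path on $\G_m(1)$ leaves a valid closed path on $\G_{m-1}(1)$, of some even length $2(j+1)$. Conversely, a closed path on $\G_m(1)$ is the same as a projected path together with an admissible interleaving of $\e_m$-moves. So the recursion follows from the key lemma below.

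\textbf{Key lemma.} For every fixed projected path of length $2(j+1)$, the number of admissible interleavings producing length $2(n+1)$ equals $N(n+1,j+1)$. In particular, it does not depend on the projected path.

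To prove it, encode a combined path as a word of length $2(n+1)$ over $\{P,U,D\}$, where $P$ is a projected step and $U,D$ are $\pm\e_m$ moves.

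1. Every letter changes exactly one coordinate by $\pm1$. Hence after $t$ steps the total sum $x_1+\dots+x_m$ has parity $t$.
2. By the rule for $\e_m$, a move $U$ can only occur at odd positions $t$ and a move $D$ only at even positions, counting positions from $t=0$. A $D$ additionally requires $x_m>0$ beforehand.
3. The projected path itself imposes no further constraint on where the $P$'s go. This gives the independence from the projected path.

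Now group positions into $n+1$ consecutive pairs $(2i,2i+1)$. Each pair is one of $PP$, $PU$, $DP$, $DU$. Reading $PU$ as an up step, $DP$ as a down step, and $PP$, $DU$ as two colours of level steps, the admissible words become bicoloured Motzkin paths of length $n+1$ in which the $DU$-level step is forbidden at height $0$. Such paths are classically equinumerous with Dyck paths of semilength $n+1$. The number of $P$'s is $2(\#PU+\#PP)=2(j+1)$, so the relevant statistic is $\#PU+\#PP=j+1$.

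It then remains to check that this statistic corresponds to the number of peaks. I would use the standard pairing of Dyck steps: cut a Dyck path of semilength $n+1$ after its first step and before its last step, then read the remaining steps in pairs, with $uu\mapsto$ up, $dd\mapsto$ down, and $ud$, $du\mapsto$ the two level colours. Under this pairing, peaks are counted by the initial $u$ plus the $ud$ pairs, which should correspond to $PU$ plus $PP$. This is essentially the bijection between Dyck paths with prescribed peaks and words announced in the abstract.

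\textbf{Main obstacle.} I expect the hardest step to be getting this bijection exactly right: matching the height-$0$ restriction on $DU$ with the Dyck boundary condition, and matching peaks with $\#PU+\#PP$ including the boundary terms. If the naive pairing is off by one, I would first try swapping the roles of the two level colours or shifting the pairing by one position, and check the result against the small cases $n=0,1,2$.
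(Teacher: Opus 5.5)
Your reduction is correct and has the same skeleton as the paper's proof. The row-sum recursion for $A(1)^m$ is right. So is the observation that $\pm\e_m$ moves are constrained only by the parity of their position and by $x_m$, which makes the fibre over a projected path independent of that path. The encoding of the fibre by pairs $PP,PU,DP,DU$ as Motzkin paths of length $n+1$ with the $DU$ level step forbidden at height $0$ is also fine.

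The gap is the key lemma, which is the only genuinely enumerative step, and your sketch of it fails. Cutting a Dyck path of semilength $n+1$ after its first and before its last step produces \emph{unrestricted} bicoloured Motzkin paths of length $n$, not your restricted ones of length $n+1$. The peak count ``initial $u$ plus $ud$ pairs'' is already wrong for $n=1$: $uudd$ gives the pair $ud$ and would be assigned $2$ peaks, while $udud$ gives $du$ and would be assigned $1$.

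Your fallbacks do not repair this. Swapping the two colours destroys the height-$0$ restriction. Pairing without cutting, $(w_1w_2)(w_3w_4)\cdots$ with $PU\mapsto uu$, $DP\mapsto dd$, $PP\mapsto ud$, $DU\mapsto du$, is a genuine bijection onto Dyck paths of semilength $n+1$. However, it sends $\#PU+\#PP$ to the number of up steps in odd positions, not to the number of peaks; for example, $uuuudddd$ has two up steps in odd positions but one peak. So the Narayana distribution of your statistic remains unproved.

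The paper supplies exactly this missing fact as Lemma~\ref{lem:pre_bijection}. Your fibre words are $P\,w'\,P$ with $w'\in\hat{P}_{2n,k,k}$ and $k=n-j$, reading $P$ as $H$. The identity $|\hat{P}_{2n,k,k}|=N(n+1,k+1)$ follows from a cycle-lemma argument. The map $w'\mapsto UHw'$ selects, in each class of even cyclic shifts of $P_{2n+2,k+1,k}$, the unique word all of whose non-empty prefixes contain more $U$'s than $D$'s. This gives $\frac{1}{n+1}\binom{n+1}{k+1}\binom{n+1}{k}$, and Narayana symmetry turns $N(n+1,n-j+1)$ into the $N(n+1,j+1)$ you need.

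If you prefer to stay in your Motzkin language, a short generating-function argument also closes the gap. Let $F(z,t)$ count your restricted paths by length and $t^{\#PU+\#PP}$, and let $G$ count the same paths without the height-$0$ restriction. First-return decompositions give $F=1+tzF+tz^2GF$ and $G=1+(1+t)zG+tz^2G^2$. Substituting shows $F=1+tzG$, hence $F=1+zF(F-1+t)$. This is the functional equation for Dyck paths counted by semilength and peaks.
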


  The following interpretations for the moments $W_{m+1}(1;2n)$ were known prior to this work:
\begin{itemize}
  \item
    for $m=1$, these are Dyck paths; see the sequence of Catalan numbers \href{https://oeis.org/A000108}{\texttt{A000108}} in~\cite{oeis};
  \item
	for $m=2$, these are the row sums of the square of the Narayana triangle; see the sequence \href{https://oeis.org/A103370}{\texttt{A103370}} in~\cite{oeis}.
\end{itemize}
  For $m=3,4,5$, the corresponding sequences are also stocked in~\cite{oeis}; see the sequences \href{https://oeis.org/A253095}{\texttt{A253095}}, \href{https://oeis.org/A253096}{\texttt{A253096}} and~\href{https://oeis.org/A253095}{\texttt{A253097}}, respectively.

  Theorems~\ref{thm:d=2_interpretation} and~\ref{thm:d=4_interpretation} can also be considered in the context of Stieltjes moment sequences.
  Recall that $(a_n)_{\geqslant0}$ is a \emph{Stieltjes moment sequence} if there exists a measure $\mu$ on the set $\mathbb{R}_{\geqslant0}$ such that
  \begin{equation}\label{eq:Stielties-moments}
    a_n = \int x^nd\mu(x)
  \end{equation}
  for all $n\in\mathbb{Z}_{\geqslant0}$.
  Equivalently~\cite{Stieltjes1894, GantmakherKrein1937}, the \emph{Hankel matrices} $H^{(0)}_{\infty}(a)=\big(a_{i+j}\big)_{i,j\geqslant0}$ and $H^{(1)}_{\infty}(a)=\big(a_{i+j+1}\big)_{i,j\geqslant0}$ are both positive semidefinite, that is, they are symmetric and all their principal minors are non-negative.
  Another equivalent condition is that there exists a sequence of non-negative real numbers $(\alpha_n)_{n\geqslant0}$ such that the generating function for the sequence $(a_n)_{\geqslant0}$ satisfies
  \[
    \sum\limits_{n=0}^{\infty}a_nt^n
     = 
    \dfrac{\alpha_0}{1 - \dfrac{\alpha_1t}{1 - \dfrac{\alpha_2t}{1 - \ldots}}}
    \, .
  \]
  The initial problem stated by Stieltjes was to establish the necessary and sufficient conditions for a sequence $(a_n)_{\geqslant0}$ to be represented in the form~\eqref{eq:Stielties-moments}.
  Elvey Price and Guttmann~\cite[Theorem 4.3]{ElveyPriceGuttmann2019} formulated the following condition: if $\Gamma$ is a graph with a fixed base vertex $v_0$, such that each vertex in $\Gamma$ has degree at most $C\in\mathbb{Z}_{>0}$, and if $a_n$ is the number of paths of length $n$ in $\Gamma$ which start and end at $v_0$, then there exists a probability measure $\mu$ on $\mathbb{R}_{\geqslant0}$ such that for each $n\in\mathbb{Z}_{\geqslant0}$ the $n$th moment of $\mu$ is equal to $a_{2n}$
  (moreover, in this case the measure $\mu$ is unique and its support is contained in the interval $[0,C^2]$).
  In this context, our theorems are examples of the solution of the inverse problem: given a Stielties moment sequence $(a_n)_{\geqslant0}$, find a graph $\Gamma$ such that excursions on $\Gamma$ are counted by the sequence $(a_n)_{\geqslant0}$.

  For $\nu=0$, the structure of the graph $\G_m(0)$ described by Definition~\ref{def:G_m(0)} naturally comes from the problem.
  The idea is to consider the steps of a random flight as random unit complex variables.
  This allows us to represent the moments in terms of paths on a specific graph (Proposition~\ref{prop:d=2_pre-interpretation}), which, after an appropriate transformation, becomes our graph $\G_m(0)$.
  The matrix form~\eqref{eq:A(nu)matrix} also comes naturally in this case as a corollary (Corollary~\ref{cor:d=2_matrix_form}).

  For $\nu=1$, our proof is not direct.
  Instead, we rely on relation~\eqref{eq:A(nu)matrix} together with certain counting arguments.
  This leaves room for further explorations and the search for a potential natural explanation of Theorem~\ref{thm:d=4_interpretation}, probably in terms of quaternions.

  The structure of the paper is the following.
  We start an exposition with Section~\ref{sec:paths_and_words} recalling some useful facts related to paths and words.
  In particular, in the first part we recall Dyck paths, Motzkin paths, Catalan numbers, and Narayana numbers, and in the second part we show that a family of Motzkin paths is counted by Narayana numbers (Lemma~\ref{lem:pre_bijection}).
  Section~\ref{sec:interpretation} is devoted to our main theorems.
  Thus, in Section~\ref{subsec:d=2} we establish Theorem~\ref{thm:d=2_interpretation} using the basic properties of complex numbers and deduce relation~\eqref{eq:A(nu)matrix} for $\nu=0$, while in Section~\ref{subsec:d=4} we apply Lemma~\ref{lem:pre_bijection} together with relation~\eqref{eq:A(nu)matrix} to establish Theorem~\ref{thm:d=4_interpretation}.
  In Section~\ref{sec:bijection}, we rely on Lemma~\ref{lem:pre_bijection} to create a bijection between Motzkin paths of a special form and Dyck paths of a fixed number of peaks.
  We further apply Lemma~\ref{lem:pre_bijection} in Section~\ref{sec:brick_walks_enumeration} to enumerate paths on various cones of the brick wall lattice, i.e. on the graph $\G_2(0)$.
  Finally, we conclude the paper with Section~\ref{sec:conclusion} by discussing some directions for further research.

\section{Paths and words}
\label{sec:paths_and_words}

  In this section, we recall the notion of lattice paths and discuss some particular examples, such as Dyck paths and Motzkin paths.
  In the first part, we reveal the connections of bridges and Dyck paths with the matrix $A(\nu)$ through the peak statistic.
  We also recall related integer sequences, namely, the Catalan numbers, the Narayana numbers, and the Motzkin numbers.
  In the second part, we discuss one particular enumerative result that plays a~crucial role in the next sections of the paper.

\subsection{Preliminaries}
\label{subsec:preliminaries}
  
  As we mentioned in Section~\ref{sec:introduction}, the matrix entries~\eqref{eq:A(nu)matrix} are integers for $\nu=0$ and $\nu=1$.
  In both cases, they can be interpreted in terms of paths on the lattice $\mathbb{Z}^2$.
  These paths start at the origin and consist of several steps of two types: an \emph{up step} $U=(1,1)$ and a \emph{down step} $D=(1,-1)$.
  For the considered paths, the total numbers of up steps and down steps are the same, and so each path ends on the $x$-axis. If there are no other constraints, we call such paths \emph{bridges}; if our path is constrained to stay in a specific domain, we call it \emph{excursion}.
  
  A convenient way to work with a path is to represent it by a word over the alphabet $\{U,D\}$.
  In the case where the path is a bridge or an excursion, the number of occurrences of the letters $U$ and $D$ is equal.
  An important statistic that will come to the first plan in our investigation is a \emph{peak}, that is, the occurrence of two consecutive letters $UD$ in a word.

  Let us take a closer look at the case $\nu=0$.
  In this case, the entries of the matrix $A(\nu)$ become \emph{squares of binomial coefficients} (the sequence \href{https://oeis.org/A008459}{\texttt{A008459}} in~\cite{oeis}):
  \[
    A_{ij}(0) = \binom{i}{j}^2
    \, .
  \]
  This value $A_{ij}(0)$ counts bridges of length $2i$ with $j$ peaks (for example, $A_{20}(0)=1$, $A_{21}(0)=4$, $A_{22}(0)=1$, and the corresponding paths are shown in Figure~\ref{fig:A(0)_(ij)}).
  Indeed, to enumerate bridges of length $2i$ with $j$ peaks, we fix the $j$ occurrences of the peaks $UD$, and then treat them as separators between the $(i-j)$ remaining letters~$U$, which can thus be distributed in $\binom{(j+1)+(i-j)-1}{i-j}=\binom{i}{j}$ ways. 
  In the same way, independently of the letters~$U$, we treat the peaks as separators between the $(i-j)$ remaining letters $D$, which gives us the same number $\binom{i}{j}$ of possible choices for the letter positions.
  Since the positions of the letters $U$ and $D$ between two consecutive peaks are uniquely determined as $D\ldots DU\ldots U$, the number of paths under consideration is $\binom{i}{j}^2$.

\begin{figure}[ht!]
\centering
\begin{tikzpicture}[line width=.5pt]
 \begin{scope}[scale=1.2, xshift = 5cm]
  \draw[->,dashed] (0pt,-25pt) -- (0,25pt);
  \draw[->,dashed] (0pt,0pt) -- (50pt,0pt);
  \foreach \x in {25pt,35pt} \draw (\x,-30pt) node {$U$};
  \foreach \x in {5pt,15pt} \draw (\x,-30pt) node {$D$};
%  \draw (23pt,-30pt) node {$DDUU$};
  \coordinate (a0) at (0pt,0pt);
  \coordinate (a1) at (10pt,-10pt);
  \coordinate (a2) at (20pt,-20pt);
  \coordinate (a3) at (30pt,-10pt);
  \coordinate (a4) at (40pt,0pt);
  \draw (a0) -- (a1) -- (a2) -- (a3) -- (a4);
  \foreach \p in {a0,a1,a2,a3,a4}
   \filldraw [black] (\p) circle (1.5pt);
 \end{scope}
 \begin{scope}[scale=1.2, xshift = 2.5cm]
  \draw[->,dashed] (0pt,-25pt) -- (0,25pt);
  \draw[->,dashed] (0pt,0pt) -- (50pt,0pt);
  \foreach \x in {15pt,35pt} \draw (\x,-30pt) node {$U$};
  \foreach \x in {5pt,25pt} \draw (\x,-30pt) node {$D$};
%  \draw (23pt,-30pt) node {$DUDU$};
  \coordinate (a0) at (0pt,0pt);
  \coordinate (a1) at (10pt,-10pt);
  \coordinate (a2) at (20pt,0pt);
  \coordinate (a3) at (30pt,-10pt);
  \coordinate (a4) at (40pt,0pt);
  \draw (a0) -- (a1) -- (a2) -- (a3) -- (a4);
  \foreach \p in {a0,a1,a3,a4}
   \filldraw [black] (\p) circle (1.5pt);
  \foreach \p in {a2}{
   \filldraw [red] (\p) circle (2pt);
   \draw (\p) circle (2pt);
  }
 \end{scope}
 \begin{scope}[scale=1.2, xshift = 0cm]
  \draw[->,dashed] (0pt,-25pt) -- (0,25pt);
  \draw[->,dashed] (0pt,0pt) -- (50pt,0pt);
  \foreach \x in {15pt,25pt} \draw (\x,-30pt) node {$U$};
  \foreach \x in {5pt,35pt} \draw (\x,-30pt) node {$D$};
%  \draw (23pt,-30pt) node {$DUUD$};
  \coordinate (a0) at (0pt,0pt);
  \coordinate (a1) at (10pt,-10pt);
  \coordinate (a2) at (20pt,0pt);
  \coordinate (a3) at (30pt,10pt);
  \coordinate (a4) at (40pt,0pt);
  \draw (a0) -- (a1) -- (a2) -- (a3) -- (a4);
  \foreach \p in {a0,a1,a2,a4}
   \filldraw [black] (\p) circle (1.5pt);
  \foreach \p in {a3}{
   \filldraw [red] (\p) circle (2pt);
   \draw (\p) circle (2pt);
  }
 \end{scope}
 \begin{scope}[scale=1.2, xshift = -2.5cm]
  \draw[->,dashed] (0pt,-25pt) -- (0,25pt);
  \draw[->,dashed] (0pt,0pt) -- (50pt,0pt);
  \foreach \x in {5pt,35pt} \draw (\x,-30pt) node {$U$};
  \foreach \x in {15pt,25pt} \draw (\x,-30pt) node {$D$};
%  \draw (23pt,-30pt) node {$UDDU$};
  \coordinate (a0) at (0pt,0pt);
  \coordinate (a1) at (10pt,10pt);
  \coordinate (a2) at (20pt,0pt);
  \coordinate (a3) at (30pt,-10pt);
  \coordinate (a4) at (40pt,0pt);
  \draw (a0) -- (a1) -- (a2) -- (a3) -- (a4);
  \foreach \p in {a0,a2,a3,a4}
   \filldraw [black] (\p) circle (1.5pt);
  \foreach \p in {a1}{
   \filldraw [red] (\p) circle (2pt);
   \draw (\p) circle (2pt);
  }
 \end{scope}
 \begin{scope}[scale=1.2, xshift = -5cm]
  \draw[->,dashed] (0pt,-25pt) -- (0,25pt);
  \draw[->,dashed] (0pt,0pt) -- (50pt,0pt);
  \foreach \x in {5pt,25pt} \draw (\x,-30pt) node {$U$};
  \foreach \x in {15pt,35pt} \draw (\x,-30pt) node {$D$};
%  \draw (23pt,-30pt) node {$UDUD$};
  \coordinate (a0) at (0pt,0pt);
  \coordinate (a1) at (10pt,10pt);
  \coordinate (a2) at (20pt,0pt);
  \coordinate (a3) at (30pt,10pt);
  \coordinate (a4) at (40pt,0pt);
  \draw (a0) -- (a1) -- (a2) -- (a3) -- (a4);
  \foreach \p in {a0,a2,a4}
   \filldraw [black] (\p) circle (1.5pt);
  \foreach \p in {a1,a3}{
   \filldraw [red] (\p) circle (2pt);
   \draw (\p) circle (2pt);
  }
 \end{scope}
 \begin{scope}[scale=1.2, xshift = -7.5cm]
  \draw[->,dashed] (0pt,-25pt) -- (0,25pt);
  \draw[->,dashed] (0pt,0pt) -- (50pt,0pt);
  \foreach \x in {5pt,15pt} \draw (\x,-30pt) node {$U$};
  \foreach \x in {25pt,35pt} \draw (\x,-30pt) node {$D$};
%  \draw (23pt,-30pt) node {$UUDD$};
  \coordinate (a0) at (0pt,0pt);
  \coordinate (a1) at (10pt,10pt);
  \coordinate (a2) at (20pt,20pt);
  \coordinate (a3) at (30pt,10pt);
  \coordinate (a4) at (40pt,0pt);
  \draw (a0) -- (a1) -- (a2) -- (a3) -- (a4);
  \foreach \p in {a0,a1,a2,a3,a4}
   \filldraw [black] (\p) circle (1.5pt);
  \foreach \p in {a2}{
   \filldraw [red] (\p) circle (2pt);
   \draw (\p) circle (2pt);
  }
 \end{scope}
\end{tikzpicture}
 \caption{Bridges of length $4$. Peaks are marked red.}
\label{fig:A(0)_(ij)}
\end{figure}

  Given an $i$th row of the matrix $A(0)$, the sum of its entries is the $i$th \emph{central binomial} coefficient (the sequence \href{https://oeis.org/A000984}{\texttt{A000984}} in~\cite{oeis}),
  \begin{equation}\label{eq:sum-A(0)}
    \sum\limits_{j=0}^i A_{ij}(0) = \sum\limits_{j=0}^i \binom{i}{j}^2 = \binom{2i}{i}
    \, ,
  \end{equation}
  which corresponds to the total number of bridges of length $2i$.

  \smallskip

  In the case where $\nu=1$, the matrix entries are \emph{Narayana numbers} (the sequence \href{https://oeis.org/A001263}{\texttt{A001263}} in~\cite{oeis}):
  \[
    A_{ij}(1) = \dfrac{1}{i+1}\binom{i+1}{j+1}\binom{i+1}{j} = \Nar[i+1][j+1]
    \, .
  \]
  Narayana numbers are related to \emph{Dyck paths}, that is, excursions described by the constraint that they remain in the quadrant $\mathbb{Z}_{\geqslant0}^2$.
  More precisely, given two positive integers $i$ and $j$, the Narayana number $\Nar[i][j]$ counts Dyck paths of length $2i$ with $j$ peaks (see~\cite{Deutsch1999}).
  For example, $\Nar[3][1]=1$, $\Nar[3][2]=3$, $\Nar[3][3]=1$, and the corresponding Dyck paths are shown in Figure~\ref{fig:A(1)_(ij)}.
  
\begin{figure}[ht!]
\centering
\begin{tikzpicture}[scale=1.2, line width=.5pt]
 \begin{scope}[xshift = -6cm]
  \draw[->,dashed] (0pt,0pt) -- (0,35pt);
  \draw[->,dashed] (0pt,0pt) -- (70pt,0pt);
  \foreach \x in {5pt,15pt,25pt} \draw (\x,-10pt) node {$U$};
  \foreach \x in {35pt,45pt,55pt} \draw (\x,-10pt) node {$D$};
%  \draw (30pt,-10pt) node {$UUUDDD$};
  \coordinate (a0) at (0pt,0pt);
  \coordinate (a1) at (10pt,10pt);
  \coordinate (a2) at (20pt,20pt);
  \coordinate (a3) at (30pt,30pt);
  \coordinate (a4) at (40pt,20pt);
  \coordinate (a5) at (50pt,10pt);
  \coordinate (a6) at (60pt,0pt);
  \draw (a0) -- (a1) -- (a2) -- (a3) -- (a4) -- (a5) -- (a6);
  \foreach \p in {a0,a1,a2,a4,a5,a6}
   \filldraw [black] (\p) circle (1.5pt);
  \foreach \p in {a3}{
   \filldraw [red] (\p) circle (2pt);
   \draw (\p) circle (2pt);
  }
 \end{scope}
 \begin{scope}[xshift = -3cm]
  \draw[->,dashed] (0pt,0pt) -- (0,35pt);
  \draw[->,dashed] (0pt,0pt) -- (70pt,0pt);
  \foreach \x in {5pt,15pt,35pt} \draw (\x,-10pt) node {$U$};
  \foreach \x in {25pt,45pt,55pt} \draw (\x,-10pt) node {$D$};
%  \draw (30pt,-10pt) node {$UUDUDD$};
  \coordinate (a0) at (0pt,0pt);
  \coordinate (a1) at (10pt,10pt);
  \coordinate (a2) at (20pt,20pt);
  \coordinate (a3) at (30pt,10pt);
  \coordinate (a4) at (40pt,20pt);
  \coordinate (a5) at (50pt,10pt);
  \coordinate (a6) at (60pt,0pt);
  \draw (a0) -- (a1) -- (a2) -- (a3) -- (a4) -- (a5) -- (a6);
  \foreach \p in {a0,a1,a3,a5,a6}
   \filldraw [black] (\p) circle (1.5pt);
  \foreach \p in {a2,a4}{
   \filldraw [red] (\p) circle (2pt);
   \draw (\p) circle (2pt);
  }
 \end{scope}
 \begin{scope}[xshift = 0cm]
  \draw[->,dashed] (0pt,0pt) -- (0,35pt);
  \draw[->,dashed] (0pt,0pt) -- (70pt,0pt);
  \foreach \x in {5pt,15pt,45pt} \draw (\x,-10pt) node {$U$};
  \foreach \x in {25pt,35pt,55pt} \draw (\x,-10pt) node {$D$};
%  \draw (30pt,-10pt) node {$UUDDUD$};
  \coordinate (a0) at (0pt,0pt);
  \coordinate (a1) at (10pt,10pt);
  \coordinate (a2) at (20pt,20pt);
  \coordinate (a3) at (30pt,10pt);
  \coordinate (a4) at (40pt,0pt);
  \coordinate (a5) at (50pt,10pt);
  \coordinate (a6) at (60pt,0pt);
  \draw (a0) -- (a1) -- (a2) -- (a3) -- (a4) -- (a5) -- (a6);
  \foreach \p in {a0,a1,a3,a4,a6}
   \filldraw [black] (\p) circle (1.5pt);
  \foreach \p in {a2,a5}{
   \filldraw [red] (\p) circle (2pt);
   \draw (\p) circle (2pt);
  }
 \end{scope}
 \begin{scope}[xshift = 3cm]
  \draw[->,dashed] (0pt,0pt) -- (0,35pt);
  \draw[->,dashed] (0pt,0pt) -- (70pt,0pt);
  \foreach \x in {5pt,25pt,35pt} \draw (\x,-10pt) node {$U$};
  \foreach \x in {15pt,45pt,55pt} \draw (\x,-10pt) node {$D$};
%  \draw (30pt,-10pt) node {$UDUUDD$};
  \coordinate (a0) at (0pt,0pt);
  \coordinate (a1) at (10pt,10pt);
  \coordinate (a2) at (20pt,0pt);
  \coordinate (a3) at (30pt,10pt);
  \coordinate (a4) at (40pt,20pt);
  \coordinate (a5) at (50pt,10pt);
  \coordinate (a6) at (60pt,0pt);
  \draw (a0) -- (a1) -- (a2) -- (a3) -- (a4) -- (a5) -- (a6);
  \foreach \p in {a0,a2,a3,a5,a6}
   \filldraw [black] (\p) circle (1.5pt);
  \foreach \p in {a1,a4}{
   \filldraw [red] (\p) circle (2pt);
   \draw (\p) circle (2pt);
  }
 \end{scope}
 \begin{scope}[xshift = 6cm]
  \draw[->,dashed] (0pt,0pt) -- (0,35pt);
  \draw[->,dashed] (0pt,0pt) -- (70pt,0pt);
  \foreach \x in {5pt,25pt,45pt} \draw (\x,-10pt) node {$U$};
  \foreach \x in {15pt,35pt,55pt} \draw (\x,-10pt) node {$D$};
%  \draw (30pt,-10pt) node {$UDUDUD$};
  \coordinate (a0) at (0pt,0pt);
  \coordinate (a1) at (10pt,10pt);
  \coordinate (a2) at (20pt,0pt);
  \coordinate (a3) at (30pt,10pt);
  \coordinate (a4) at (40pt,0pt);
  \coordinate (a5) at (50pt,10pt);
  \coordinate (a6) at (60pt,0pt);
  \draw (a0) -- (a1) -- (a2) -- (a3) -- (a4) -- (a5) -- (a6);
  \foreach \p in {a0,a2,a4,a6}
   \filldraw [black] (\p) circle (1.5pt);
  \foreach \p in {a1,a3,a5}{
   \filldraw [red] (\p) circle (2pt);
   \draw (\p) circle (2pt);
  }
 \end{scope}
\end{tikzpicture}
 \caption{Dyck paths of length $6$. Peaks are marked red.}
\label{fig:A(1)_(ij)}
\end{figure}

  Given an $i$th row of the matrix $A(1)$, the sum of its entries is equal to the $(i+1)$th \emph{Catalan number}:
  \begin{equation}\label{eq:sum-A(1)}
    \sum\limits_{j=0}^i A_{ij}(1) = \sum\limits_{j=0}^i \Nar[i+1][j+1] =\dfrac{1}{i+2}\binom{2(i+1)}{i+1} = C_{i+1}
    \, .
  \end{equation}
  The Catalan numbers count Dyck paths and are given by the sequence \href{https://oeis.org/A000108}{\texttt{A000108}} in~\cite{oeis}.

  \smallskip

  One more family of lattice paths that will be useful in our investigation is the one of Motzkin paths.
  Compared to Dyck paths, the set of allowed steps is enriched here with the \emph{horizontal step} $H=(1,0)$.
  More precisely, a~\emph{Motzkin path} of length $n$ is a path (excursion) consisting of $n$ steps (of types $U$, $D$, and $H$) that starts at the origin, ends on the $x$-axis, and stays in the quadrant $\mathbb{Z}^2_{\geqslant0}$.
  An example of a Motzkin path is shown in Figure~\ref{fig:Motzkin_path}.
  
\begin{figure}[ht!]
\centering
\begin{tikzpicture}[scale=1.2, line width=.5pt]
 \begin{scope}
  \draw[->,dashed] (0pt,0pt) -- (0,35pt);
  \draw[->,dashed] (0pt,0pt) -- (230pt,0pt);
%  \draw (110pt,-10pt) node {$UHUDHHUDHDUHHUHUHHDHDD$};
  \coordinate (a0) at (0pt,0pt);
  \coordinate (a1) at (10pt,10pt);
  \coordinate (a2) at (20pt,10pt);
  \coordinate (a3) at (30pt,20pt);
  \coordinate (a4) at (40pt,10pt);
  \coordinate (a5) at (50pt,10pt);
  \coordinate (a6) at (60pt,10pt);
  \coordinate (a7) at (70pt,20pt);
  \coordinate (a8) at (80pt,10pt);
  \coordinate (a9) at (90pt,10pt);
  \coordinate (a10) at (100pt,0pt);
  \coordinate (a11) at (110pt,10pt);
  \coordinate (a12) at (120pt,10pt);
  \coordinate (a13) at (130pt,10pt);
  \coordinate (a14) at (140pt,20pt);
  \coordinate (a15) at (150pt,20pt);
  \coordinate (a16) at (160pt,30pt);
  \coordinate (a17) at (170pt,30pt);
  \coordinate (a18) at (180pt,30pt);
  \coordinate (a19) at (190pt,20pt);
  \coordinate (a20) at (200pt,20pt);
  \coordinate (a21) at (210pt,10pt);
  \coordinate (a22) at (220pt,0pt);
  \draw (a0) -- (a1) -- (a2) -- (a3) -- (a4) -- (a5) -- (a6) -- (a7) -- (a8) -- (a9) -- (a10) -- (a11) -- (a12) -- (a13) -- (a14) -- (a15) -- (a16) -- (a17) -- (a18) -- (a19) -- (a20) -- (a21) -- (a22);
  \foreach \p in {a0,a1,a2,a3,a4,a5,a6,a7,a8,a9,a10,a11,a12,a13, a14,a15,a16,a17,a18,a19,a20,a21,a22}
   \filldraw [black] (\p) circle (1.5pt);
  \foreach \x in {5pt,25pt,65pt,105pt,135pt,155pt}{
   \draw (\x,-10pt) node {$U$};
  }
  \foreach \x in {35pt,75pt,95pt,185pt,205pt,215pt}{
   \draw (\x,-10pt) node {$D$};
  }
  \foreach \x in {15pt,45pt,55pt,85pt,115pt,125pt,145pt,165pt,175pt,195pt}{
   \draw (\x,-10pt) node {$H$};
  }
 \end{scope}
\end{tikzpicture}
 \caption{Motzkin path of length $22$.}
\label{fig:Motzkin_path}
\end{figure}

  The Motzkin paths are enumerated by \emph{Motzkin numbers} (the sequence \href{https://oeis.org/A001006}{\texttt{A001006}} in~\cite{oeis}):
  \[
    M_n = \sum\limits_{k=0}^{\lfloor\frac{n}{2}\rfloor}\binom{n}{2k}C_k
    \, .
  \]

\subsection{A family of Motzkin paths and Narayana numbers}
\label{subsec:Motzkin_vs_Narayana}

  Let $n$, $r$, and $\ell$ be three non-negative integers such that $r \geqslant \ell$.
  Consider the set $\P_{n,r,\ell}$ of words of length $n$ over the alphabet $\{U,D,H\}$ such that in each word of $\P_{n,r,\ell}$:
  \begin{itemize}
    \item
      the total number of $U$'s is $r$, and all of them are in odd positions, 
    \item
      the total number of $D$'s is $\ell$, and all of them are in even positions.
  \end{itemize}
  Denote also by $\hat{\P}_{n,r,\ell}$ the subset of $\P_{n,r,\ell}$ that additionally satisfies the following condition:
  \begin{itemize}
    \item
      in any prefix of a word of $\hat{\P}_{n,r,\ell}$, the number of $U$'s is not less than the number of $D$'s.
  \end{itemize}
  Clearly, the sets $\P_{n,r,\ell}$ and $\hat{\P}_{n,r,\ell}$ represent certain lattice paths.
  For example,
  \[
    \P_{4,1,1} = \{UDHH,UHHD,HHUD,HDUH\}
    \qquad\mbox{and}\qquad
    \hat{\P}_{4,1,1} = \{UDHH,UHHD,HHUD\}
    \, ,
  \]
  and the corresponding lattice paths are shown in Figure~\ref{fig:P_(4,1,1)}.
  In particular, the paths from the set $\hat{\P}_{n,r,\ell}$ stays in the quadrant $\mathbb{Z}_{\geqslant0}^2$, and for $r=\ell$, the set $\hat{\P}_{n,r,r}$ can be interpreted as a subset of the set of Motzkin paths of length $n$.

\begin{figure}[ht!]
\centering
\begin{tikzpicture}[line width=.5pt]
 \begin{scope}[scale=1.2, xshift = 0cm]
  \draw[->,dashed] (0pt,-15pt) -- (0,15pt);
  \draw[->,dashed] (0pt,0pt) -- (50pt,0pt);
  \draw (5pt,-20pt) node {$U$};
  \draw (15pt,-20pt) node {$D$};
  \draw (25pt,-20pt) node {$H$};
  \draw (35pt,-20pt) node {$H$};
%  \draw (23pt,-20pt) node {$UDHH$};
  \coordinate (a0) at (0pt,0pt);
  \coordinate (a1) at (10pt,10pt);
  \coordinate (a2) at (20pt,0pt);
  \coordinate (a3) at (30pt,0pt);
  \coordinate (a4) at (40pt,0pt);
  \draw (a0) -- (a1) -- (a2) -- (a3) -- (a4);
  \foreach \p in {a0,a1,a2,a3,a4}
   \filldraw [black] (\p) circle (1.5pt);
 \end{scope}
 \begin{scope}[scale=1.2, xshift = 2.5cm]
  \draw[->,dashed] (0pt,-15pt) -- (0,15pt);
  \draw[->,dashed] (0pt,0pt) -- (50pt,0pt);
  \draw (5pt,-20pt) node {$U$};
  \draw (15pt,-20pt) node {$H$};
  \draw (25pt,-20pt) node {$H$};
  \draw (35pt,-20pt) node {$D$};
%  \draw (23pt,-20pt) node {$UHHD$};
  \coordinate (a0) at (0pt,0pt);
  \coordinate (a1) at (10pt,10pt);
  \coordinate (a2) at (20pt,10pt);
  \coordinate (a3) at (30pt,10pt);
  \coordinate (a4) at (40pt,0pt);
  \draw (a0) -- (a1) -- (a2) -- (a3) -- (a4);
  \foreach \p in {a0,a1,a2,a3,a4}
   \filldraw [black] (\p) circle (1.5pt);
 \end{scope}
 \begin{scope}[scale=1.2, xshift = 5cm]
  \draw[->,dashed] (0pt,-15pt) -- (0,15pt);
  \draw[->,dashed] (0pt,0pt) -- (50pt,0pt);
  \draw (5pt,-20pt) node {$H$};
  \draw (15pt,-20pt) node {$H$};
  \draw (25pt,-20pt) node {$U$};
  \draw (35pt,-20pt) node {$D$};
%  \draw (23pt,-20pt) node {$HHUD$};
  \coordinate (a0) at (0pt,0pt);
  \coordinate (a1) at (10pt,0pt);
  \coordinate (a2) at (20pt,0pt);
  \coordinate (a3) at (30pt,10pt);
  \coordinate (a4) at (40pt,0pt);
  \draw (a0) -- (a1) -- (a2) -- (a3) -- (a4);
  \foreach \p in {a0,a1,a2,a3,a4}
   \filldraw [black] (\p) circle (1.5pt);
 \end{scope}
 \begin{scope}[scale=1.2, xshift = 7.5cm]
  \draw[->,dashed] (0pt,-15pt) -- (0,15pt);
  \draw[->,dashed] (0pt,0pt) -- (50pt,0pt);
  \draw (5pt,-20pt) node {$H$};
  \draw (15pt,-20pt) node {$D$};
  \draw (25pt,-20pt) node {$U$};
  \draw (35pt,-20pt) node {$H$};
%  \draw (23pt,-20pt) node {$HDUH$};
  \coordinate (a0) at (0pt,0pt);
  \coordinate (a1) at (10pt,0pt);
  \coordinate (a2) at (20pt,-10pt);
  \coordinate (a3) at (30pt,0pt);
  \coordinate (a4) at (40pt,0pt);
  \draw (a0) -- (a1) -- (a2) -- (a3) -- (a4);
  \foreach \p in {a0,a1,a2,a3,a4}
   \filldraw [black] (\p) circle (1.5pt);
 \end{scope}
\end{tikzpicture}
 \caption{Lattice paths from the set $\P_{4,1,1}$. The first three of them are Motzkin paths and form the set $\hat{\P}_{4,1,1}$.}
\label{fig:P_(4,1,1)}
\end{figure}

  Since, for the words of the sets $\P_{n,r,\ell}$, the choice of the positions of $U$'s and $D$'s is independent, we have the following relations for its cardinality:
  \begin{equation}\label{eq:D-relation}
    |\P_{2n,r,\ell}| = \binom{n}{r}\binom{n}{\ell}
    \qquad\mbox{and}\qquad
    |\P_{2n+1,r,\ell}| = \binom{n+1}{r}\binom{n}{\ell}
    \, .
  \end{equation}
  Calculating the cardinality of $\hat{\P}_{n,r,\ell}$ is less trivial.
  In the case where $r=\ell$, we use combinatorial ideas similar to the cycle lemma of Dvoretzky and Motzkin~\cite{DvoretzkyMotzkin1947} and Raney's lemma~\cite{Raney1960} (see also the proof of the fact that Narayana numbers $\Nar[n][k]$ enumerate Dyck paths of length $2n$ with $k$ peaks in~\cite[Exercise 6.36]{Stanley1999}).
  
\begin{lemma}\label{lem:pre_bijection}
  For any non-negative integers $n$ and $k$, we have
  \[
    |\hat{\P}_{2n,k,k}| = \Nar[n+1][k+1]
    \, .
  \]
\end{lemma}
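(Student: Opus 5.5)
The plan is to give a direct bijection between $\hat{\P}_{2n,k,k}$ and Dyck paths of length $2(n+1)$ with $k+1$ peaks. These are counted by $\Nar[n+1][k+1]$, as recalled in Section~\ref{subsec:preliminaries}. The bijection records a Dyck path by the coordinates of its peaks and valleys. This is in the spirit of the standard argument behind \cite[Exercise 6.36]{Stanley1999}, but it avoids any cycle-lemma counting.

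First, encode the Dyck paths. A Dyck path of semilength $n+1$ with $k+1$ peaks has the form $U^{a_1}D^{b_1}U^{a_2}D^{b_2}\cdots U^{a_{k+1}}D^{b_{k+1}}$ with all $a_i,b_i\geqslant1$. Set $u_i=a_1+\dots+a_i$, the number of $U$'s before the $i$th peak, and $d_i=b_1+\dots+b_i$, the number of $D$'s before the $i$th valley, for $i=1,\ldots,k$. Then $1\leqslant u_1<\dots<u_k\leqslant n$ and $1\leqslant d_1<\dots<d_k\leqslant n$; the upper bound $n$ holds because the last run $U^{a_{k+1}}D^{b_{k+1}}$ is nonempty. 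The path is recovered uniquely from these two sequences, since $u_{k+1}=d_{k+1}=n+1$. Nonnegativity of the path is equivalent to nonnegativity of the height $u_i-d_i$ at every valley, that is, $d_i\leqslant u_i$ for $i=1,\ldots,k$. So these Dyck paths are in bijection with pairs of $k$-subsets $\{d_i\}$, $\{u_i\}$ of $\{1,\ldots,n\}$ satisfying $d_i\leqslant u_i$ componentwise.

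Second, encode the words. A word in $\P_{2n,k,k}$ is determined by the positions of its $U$'s, which are odd, say $2d_i-1$, and of its $D$'s, which are even, say $2u_i$. Here $\{d_i\}$ and $\{u_i\}$ are arbitrary $k$-subsets of $\{1,\ldots,n\}$; this is exactly the count~\eqref{eq:D-relation}. The prefix condition defining $\hat{\P}_{2n,k,k}$ holds if and only if, for each $i$, the $i$th $U$ precedes the $i$th $D$. That means $2d_i-1<2u_i$, which is equivalent to $d_i\leqslant u_i$.

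The two descriptions therefore coincide: the $U$ of the word sits at odd position $2d_i-1$ where $d_i$ is the valley $D$-count, and the $D$ sits at even position $2u_i$ where $u_i$ is the peak $U$-count. This gives the bijection and hence the lemma. The only step that needs care is the equivalence between the prefix condition and the componentwise inequality $d_i\leqslant u_i$, on both sides. For the words, the prefix count first fails, if ever, at the position of some $i$th $D$, and it fails there exactly when $2u_i<2d_i-1$. For the paths, the minima are attained at valleys. I would also check small cases, for instance $n=2$, $k=1$: there are $3=\Nar[3][2]$ pairs $(d_1,u_1)\in\{(1,1),(1,2),(2,2)\}$, matching the $3$ words $UDHH$, $UHHD$, $HHUD$ of $\hat{\P}_{4,1,1}$ listed above.
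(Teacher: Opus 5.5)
Your argument is correct, and it takes a genuinely different route from the paper. The paper's proof of this lemma never uses the peak interpretation of Narayana numbers. It maps $w\mapsto UHw\in\P_{2n+2,k+1,k}$ and observes, via a cycle-lemma argument, that among the $n+1$ parity-preserving rotations of any word of $\P_{2n+2,k+1,k}$ exactly one has every nonempty prefix with strictly more $U$'s than $D$'s. It then reads off $|\hat{\P}_{2n,k,k}|=\frac{1}{n+1}\binom{n+1}{k+1}\binom{n+1}{k}$ from~\eqref{eq:D-relation}. You instead identify two sets with the same set of pairs of $k$-subsets $\{d_i\},\{u_i\}$ of $\{1,\ldots,n\}$ satisfying $d_i\leqslant u_i$: the set $\hat{\P}_{2n,k,k}$, and the set of Dyck paths of semilength $n+1$ with $k+1$ peaks. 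You then import the count of Dyck paths by peaks, which the paper only quotes in Section~\ref{subsec:preliminaries}, as a black box. So the counting is outsourced rather than avoided. The steps you single out all check out: recovering the path from the two sequences, minima occurring at valleys, and the prefix condition failing first at some $i$th $D$. The degenerate cases $k=0$ and $k>n$ also work. The paper's route is self-contained and yields the closed formula directly; combined with any bijection, it actually reproves the peak theorem. Your route gives an explicit, non-recursive bijection with an obvious inverse, which is exactly the content of Proposition~\ref{prop:bijection}. The paper obtains that proposition only through the iterative prime-path algorithm of Section~\ref{sec:bijection}, and it certifies bijectivity by injectivity plus the cardinality from this lemma. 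Your bijection is not the paper's: on the word of Figure~\ref{fig:bijection_algorithm}~a) it gives $UUDUUDUDDUUUUDDDUDUDDD$ rather than the path of panel~f). If you want to remove the black box too, you can count the pairs with $d_i\leqslant u_i$ directly, e.g.\ by Lindstr\"om--Gessel--Viennot, as $\binom{n}{k}^2-\binom{n}{k-1}\binom{n}{k+1}$, which equals $N(n+1,k+1)$.
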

\begin{proof}
  Consider the mapping $\varphi\colon\hat{P}_{2n,k,k}\to{P}_{2n+2,k+1,k}$ defined by adding the prefix $UH$ to the beginning of a~word, that is, $\varphi(w)=UHw$.
  Among the $n+1$ cyclic permutations of $\varphi(w)$ that belong to ${\P}_{2n+2,k+1,k}$, the word $\varphi(w)$ is characterized by the condition that all its non-empty prefixes contain more $U$'s than $D$'s.
  Moreover, for any word $w'\in{\P}_{2n+2,k+1,k}$, there is exactly one cyclic permutation of $w'$ with this property.
  That is why the mapping $\varphi$ can be interpreted as a bijection between $\hat{\P}_{2n,k,k}$ and the equivalence classes of ${\P}_{2n+2,k+1,k}$ with respect to cycle permutations.
  Since every class contains exactly $n+1$ elements, relation~\eqref{eq:D-relation} gives us
  \[
    |\hat{\P}_{2n,k,k}|
    = 
    \dfrac{\P_{2n+2,k+1,k}}{n+1}
    = 
    \dfrac{1}{n+1}\binom{n+1}{k+1}\binom{n+1}{k}
    = 
    \Nar[n+1][k+1]
    \, ,
  \]
  which concludes the proof.
\end{proof}

\section{Combinatorial interpretation of moment sequences}
\label{sec:interpretation}

  This section is devoted to our main result: a combinatorial interpretation of moments $W_m(\nu;2n)$ in terms of lattice paths.
  In the first part, we discuss the plane case ($d=2$); in the second part, we establish the result for the four dimensional case ($d=4$).

\subsection{Two dimensional case}
\label{subsec:d=2}

  Let us consider the plane case $d=2$, and therefore $\nu=0$.
  In this case, we identify $\mathbb{R}^2$ with the complex plane~$\mathbb{C}$,
  so that each step $A_k$ is a random complex number with modulus~$1$.
  Since $|A_k|^2=A_k\bar{A}_k=1$, the conjugate of each step coincides with its inverse: $\bar{A}_k=A_k^{-1}$. 

  For convenience, consider $m+1$ independent random variables $A_0,\ldots,A_m$ that are identically distributed in the unit circle.
  The above observation allows us to rewrite the $2n$th moment in the following way:
  \[
    W_{m+1}(0;2n)
     =
    \mathbb{E}\Big(
      |A_0^{{\color{white}1}} + \ldots + A_m^{{\color{white}1}}|^{2n}
    \Big)
%     =
%    \mathbb{E}\Big(
%      (A_0 + \ldots A_m)^{n}
%      (\bar{A}_0 + \ldots \bar{A}_m)^{n}
%    \Big)
%     =
%    \mathbb{E}\Big(
%      (A_0^{{\color{white}1}} + \ldots + A_m^{{\color{white}1}})^{n}
%      (A_0^{-1} + \ldots + A_m^{-1})^{n}
%    \Big)
     =
    \mathbb{E}\left(
      \Big(
        \big(A_0^{{\color{white}1}} + \ldots + A_m^{{\color{white}1}}\big)
        \big(A_0^{-1} + \ldots + A_m^{-1}\big)
      \Big)^{n}
    \right)
    \, .
  \]
  We can think of this expected value as a sum of $(m+1)^{2n}$ summands of the form
  \(
    \mathbb{E}\big(A_{i_1}^{{\color{white}1}}A_{j_1}^{-1}A_{i_2}^{{\color{white}1}}A_{j_2}^{-1}\ldots A_{i_n}^{{\color{white}1}}A_{j_n}^{-1}\big),
  \)
  where $0\leqslant i_k,j_k \leqslant m$ for all $k$ that satisfy $1\leqslant k \leqslant n$.
    
  Since $\mathbb{E}(A_k^s) = 0$ for any non-zero integer $s$, the independence of $A_k$ implies that the expected value of a~monomial $A=A_{i_1}^{{\color{white}1}}A_{j_1}^{-1}A_{i_2}^{{\color{white}1}}A_{j_2}^{-1}\ldots A_{i_n}^{{\color{white}1}}A_{j_n}^{-1}$ is non-zero if and only if all terms cancel out, i.e., $A=1$.
  Therefore, the moment $W_{m+1}(0;2n)$ is equal to the constant term in
  \(
    \Big(
      \big(A_0^{{\color{white}1}} + \ldots + A_m^{{\color{white}1}}\big)
      \big(A_0^{-1} + \ldots + A_m^{-1}\big)
    \Big)^{n}
  \):
%  $(A_0^{{\color{white}1}} + \ldots + A_m^{{\color{white}1}})^{n}(A_0^{-1} + \ldots + A_m^{-1})^{n}$:
  \[
    W_{m+1}(0;2n)
    =
    [A_0^0 \ldots A_m^0]
    \Big(
      \big(A_0^{{\color{white}1}} + \ldots + A_m^{{\color{white}1}}\big)
      \big(A_0^{-1} + \ldots + A_m^{-1}\big)
    \Big)^{n}
    \, .
  \]

  Now, let us give a lattice path interpretation of this constant term.
  To this end, consider the standard basis $(\e_0,\ldots,\e_m)$ in $\mathbb{R}^{m+1}$ and assign the unit vectors $\e_k$ and $-\e_k$, respectively, to the variables $A_k$ and their inverses $A_k^{-1}$, where $0 \leqslant k \leqslant m$.
  By doing so, we can interpret the monomial $A$ from above as the path $O,\e_{i_1},-\e_{j_1},\e_{i_2},-\e_{j_2},\ldots,\e_{i_n},-\e_{j_n}$ in $\mathbb{R}^{m+1}$, where $O=(0,\ldots,0)$ is the origin.
  In particular, $\mathbb{E}(A)=1$ if and only if the path is closed, that is, it returns to the origin (see Figure~\ref{fig:monomials_and_paths}).

\begin{figure}[ht!]
\centering
\begin{tikzpicture}[>=latex, line width=.5pt, scale=1.0, x={(1cm,0cm)}, y={(-0.6cm,-0.4cm)}, z={(0cm,1cm)}]
 \begin{scope}[line width=1.5pt]
  \draw[->] (0,0,0) -- ++(1,0,0);
  \draw[->, blue] (0,0,0) -- ++(0,1,0);
  \draw[->, red] (0,0,0) -- ++(0,0,1);
  \filldraw[white] (0,0,0) circle (3pt);
  \draw[line width=1.5pt] (0,0,0) circle (3pt);
  \draw (0,-0.5,0.5) node {$e_1$};
  \draw (-0.5,0,0) node {$e_2$};
  \draw (1,0.5,-0.1) node {$e_0$};
 \end{scope}
 \begin{scope}[xshift = -165, yshift=0]
  \clip (-6.4,-6.5) rectangle (7.8,6.5);
  \draw[->, line width=2.5pt, red] (0,0,0) -- ++(0,0,1);
  \draw[->, line width=2.5pt, blue] (0,0,1) -- ++(0,-1,0);
  \draw[->, line width=2.5pt] (0,-1,1) -- ++(1,0,0);
  \draw[->, line width=2.5pt, red] (1,-1,1) -- ++(0,0,-1);
  \draw[->, line width=2.5pt] (1,-1,0) -- ++(1,0,0);
  \draw[->, line width=2.5pt, blue] (2,-1,0) -- ++(0,-1,0);
  \foreach \x in {-5,...,5}{
   \foreach \y in {-3,...,3}{
    \draw (\x,\y,-\x-\y) -- ++(1,0,0);
    \draw[blue] (\x,\y,-\x-\y) -- ++(0,1,0);
    \draw[red] (\x,\y,-\x-\y) -- ++(0,0,1);
   }
  }
  \foreach \x in {-5,...,5}{
   \foreach \y in {-3,...,3}{
    \filldraw (\x,\y,-\x-\y) circle (1.5pt);
    \filldraw (\x+1,\y,-\x-\y) circle (1.5pt);
   }
  }
  \filldraw[white] (0,0,0) circle (3pt);
  \draw[line width=1.5pt] (0,0,0) circle (3pt);
 \end{scope}
 \begin{scope}[xshift = 135, yshift=0]
  \clip (-6.4,-6.5) rectangle (7.8,6.5);
  \draw[->, line width=2.5pt, red] (0,0,0) -- ++(0,0,1);
  \draw[->, line width=2.5pt, blue] (0,0,1) -- ++(0,-1,0);
  \draw[->, line width=2.5pt] (0,-1,1) -- ++(1,0,0);
  \draw[->, line width=2.5pt, red] (1,-1,1) -- ++(0,0,-1);
  \draw[->, line width=2.5pt, blue] (1,-1,0) -- ++(0,1,0);
  \draw[->, line width=2.5pt] (1,0,0) -- ++(-1,0,0);
  \foreach \x in {-5,...,5}{
   \foreach \y in {-3,...,3}{
    \draw (\x,\y,-\x-\y) -- ++(1,0,0);
    \draw[blue] (\x,\y,-\x-\y) -- ++(0,1,0);
    \draw[red] (\x,\y,-\x-\y) -- ++(0,0,1);
   }
  }
  \foreach \x in {-5,...,5}{
   \foreach \y in {-3,...,3}{
    \filldraw (\x,\y,-\x-\y) circle (1.5pt);
    \filldraw (\x+1,\y,-\x-\y) circle (1.5pt);
   }
  }
  \filldraw[white] (0,0,0) circle (3pt);
  \draw[line width=1.5pt] (0,0,0) circle (3pt);
 \end{scope}
\end{tikzpicture}
 \caption{The paths on the lattice $\mathbb{Z}^3$ representing the monomials $A_1^{{\color{white}1}}A_2^{-1}A_0^{{\color{white}1}}A_1^{-1}A_0^{{\color{white}1}}A_2^{-1}$ (on the left) and $A_1^{{\color{white}1}}A_2^{-1}A_0^{{\color{white}1}}A_1^{-1}A_2^{{\color{white}1}}A_0^{-1}$ (on the right). Here, $m=2$ and $n=3$.}
\label{fig:monomials_and_paths}
\end{figure}

  The actual support of the set of paths corresponding to all possible monomials is the graph $(V_{m+1},E_{m+1})$ whose sets of vertices and edges are defined by
  \[
    V_{m+1}
      =
    \left\{
      (x_0,\ldots,x_m) \in \mathbb{Z}
      \,\,\colon\,\,\,
        \sum\limits_{k=0}^{m}x_k=0
        \quad\mbox{or}\quad
        \sum\limits_{k=0}^{m}x_k=1
    \right\}
  \]
  and
  \[
    E_{m+1}
      =
    \Big\{
      \x\y \,\colon\,\, 
        \x,\y \in V_{m+1},\,
        |\x-\y|=1
    \Big\}
    \, ,
  \]
  respectively. 
  This is a connected $(m+1)$-regular graph, which is infinite for $m>0$; see Figure~\ref{fig:d=2_pre-interpretation} for $m=0,1,2$.

\begin{figure}[ht!]
\centering
\begin{tikzpicture}[scale = 0.6]
 \begin{scope}[xshift = -200]
  \draw (-0.5,3.7) node {a)};
  \draw (0,0) -- ++(1,0);
  \filldraw (1,0) circle (2.5pt);
  \filldraw[white] (0,0,0) circle (5pt);
  \draw[line width=1.5pt] (0,0,0) circle (5pt);
 \end{scope}
 \begin{scope}[xshift = 30]
  \draw (-5,3.7) node {b)};
  \clip (-4.5,-4.5) rectangle (4.5,4.5);
  \foreach \x in {-4,...,4}{
   \draw (\x,-\x) -- ++(1,0);
   \draw[blue] (\x,-\x) -- ++(0,1);
   \filldraw (\x,-\x) circle (2.5pt);
   \filldraw (\x+1,-\x) circle (2.5pt);
  }
  \filldraw[white] (0,0,0) circle (5pt);
  \draw[line width=1.5pt] (0,0,0) circle (5pt);
 \end{scope}
 \begin{scope}[xshift = 400, yshift=-0.1cm, x={(1cm,0cm)}, y={(-0.5cm,-0.4cm)}, z={(0cm,1cm)}]
  \draw (-4,4,5.3) node {c)};
  \clip (-10.5,-10.5) rectangle (10.5,10.5);
  \foreach \x in {-5,...,5}{
   \foreach \y in {-5,...,5}{
    \draw (\x,\y,-\x-\y) -- ++(1,0,0);
    \draw[blue] (\x,\y,-\x-\y) -- ++(0,1,0);
    \draw[red] (\x,\y,-\x-\y) -- ++(0,0,1);
    \filldraw (\x,\y,-\x-\y) circle (2.5pt);
    \filldraw (\x+1,\y,-\x-\y) circle (2.5pt);
   }
  }
  \filldraw[white] (0,0,0) circle (5pt);
  \draw[line width=1.5pt] (0,0,0) circle (5pt);
 \end{scope}
\end{tikzpicture}
 \caption{The moments $W_{m+1}(0;2n)$ count paths on the graph $(V_{m+1},E_{m+1})$ returning to the origin after $2n$~steps.
 The graph is given for\qquad a) $m=0$,\qquad b) $m=1$,\qquad c) $m=2$.}
\label{fig:d=2_pre-interpretation}
\end{figure}

  Thus, we just obtained the following combinatorial interpretation of the moments.

\begin{proposition}\label{prop:d=2_pre-interpretation}
  For any two integers $m,n\in\mathbb{Z}_{\ge0}$, the moment $W_{m+1}(0;2n)$ is equal to the number of closed paths of length $2n$ on the graph $(V_{m+1},E_{m+1})$ which start and end at the origin.
\end{proposition}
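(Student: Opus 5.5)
The plan is to make rigorous the chain of identifications sketched just before the statement, in three steps: moment to constant term, constant term to words, words to paths on $(V_{m+1},E_{m+1})$.

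\textbf{Step 1: the moment as a constant term.} Write the end point as $A_0+\ldots+A_m$ with independent uniform unit complex numbers $A_k$. Use $|z|^2=z\bar z$ and $\bar A_k=A_k^{-1}$ to get
\[
W_{m+1}(0;2n)=\mathbb{E}\Big(\big((A_0+\ldots+A_m)(A_0^{-1}+\ldots+A_m^{-1})\big)^n\Big).
\]
Expand the product into the $(m+1)^{2n}$ ordered words $A_{i_1}A_{j_1}^{-1}\cdots A_{i_n}A_{j_n}^{-1}$, without collecting terms. Each word is a Laurent monomial $\prod_k A_k^{s_k}$. By independence its expectation is $\prod_k\mathbb{E}(A_k^{s_k})$. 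Since $\mathbb{E}(e^{is\theta})=0$ for $s\neq0$ and equals $1$ for $s=0$, the expectation is $1$ exactly when all $s_k=0$, and $0$ otherwise. Hence the moment equals the number of ordered words $(i_1,j_1,\ldots,i_n,j_n)\in\{0,\ldots,m\}^{2n}$ in which every index occurs as often among the $i$'s as among the $j$'s.

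\textbf{Step 2: words as paths in $\mathbb{Z}^{m+1}$.} Map a word to the lattice path with successive steps $\e_{i_1},-\e_{j_1},\e_{i_2},-\e_{j_2},\ldots$. This map is injective, since the step sequence recovers the indices. The endpoint is $\sum_k s_k\e_k$, so the path is closed if and only if all $s_k=0$.

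\textbf{Step 3: the image is exactly the set of closed paths on the graph.}
\begin{itemize}
\item Image lies in the graph: along such a path the coordinate sum alternates $0,1,0,1,\ldots$, so every vertex lies in $V_{m+1}$. Each step has unit length, so it is an edge of $E_{m+1}$.
\item Every closed path on the graph is in the image: an edge $\x\y$ of $E_{m+1}$ joins a vertex of sum $0$ to one of sum $1$, because two points differing by $\pm\e_k$ have sums differing by exactly $1$ (and neither $-1$ nor $2$ is allowed). So from the origin (sum $0$), the path alternates between the two layers. Every odd-numbered step goes from sum $0$ to sum $1$, hence is $+\e_i$ for some $i$. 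Every even-numbered step goes back, hence is $-\e_j$. Such a closed path of length $2n$ therefore comes from a unique word, and that word has constant term $1$.
\end{itemize}
This gives the bijection, and the count follows.

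The only delicate point is this converse direction: one must check that paths on $(V_{m+1},E_{m+1})$ cannot take steps in another pattern. The two-layer alternation argument settles this. The rest is routine.
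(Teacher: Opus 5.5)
Your proof is correct and follows essentially the same route as the paper: you rewrite the moment as a constant term using $\bar A_k=A_k^{-1}$ and $\mathbb{E}(A_k^s)=0$ for $s\neq0$, then encode each monomial as a lattice path with alternating steps $+\e_i$ and $-\e_j$ in $\mathbb{Z}^{m+1}$. The one difference is that you explicitly prove the converse direction, namely that every closed path on $(V_{m+1},E_{m+1})$ from the origin alternates between the layers of coordinate sum $0$ and $1$ and so comes from a unique word. The paper compresses this step into the remark that this graph is the ``actual support'' of the paths.
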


  Although the graph $(V_{m+1},E_{m+1})$ is defined in the space~$\mathbb{R}^{m+1}$, its actual structure is $m$-dimensional.
  In the case where $m>0$, the orthogonal projection of the graph $(V_{m+1},E_{m+1})$ onto the hyperplane 
  \[
    \Big\{
      (x_0,\ldots,x_m) \,:\, x_0+\ldots+x_m=0
    \Big\}
  \]
  gives us a regular lattice.
  For example, for $m=2$ we obtain the honeycomb lattice (Figure~\ref{fig:d=2,m=3}, on the left), while for $m=3$ the result is the so-called diamond lattice.
  From a counting point of view, it is more convenient for the lattice vertices to be integers, so we deform this regular lattice.
  Continuing our example for $m=2$, we get the lattice made up of bricks (Figure~\ref{fig:d=2,m=3}, on the right).

\begin{figure}[ht!]
\centering
\begin{tikzpicture}
 \def\h{5.2};
 \begin{scope}[scale = 0.1]
  \clip (-33,-5.5*\h) rectangle (30,5.5*\h);
  \foreach \x in {-2,...,1}{
   \foreach \y in {-3,...,2}{
    \draw (18*\x+9,2*\h*\y+\h) -- ++(6,0) -- ++(3,\h) -- ++(-3,\h) -- ++(-6,0) -- ++(-3,-\h) -- cycle;
    \draw (18*\x,2*\h*\y) -- ++(6,0) -- ++(3,\h) -- ++(-3,\h) -- ++(-6,0) -- ++(-3,-\h) -- cycle;
    \filldraw (18*\x,2*\h*\y) circle (15pt);
    \filldraw (18*\x+6,2*\h*\y) circle (15pt);
    \filldraw (18*\x+9,2*\h*\y+\h) circle (15pt);
    \filldraw (18*\x+15,2*\h*\y+\h) circle (15pt);
   }
  }
  \filldraw[white] (0,0,0) circle (30pt);
  \draw[line width=1.5pt] (0,0,0) circle (30pt);
 \end{scope}
 \begin{scope}[scale = 0.6, xshift = 400]
  \clip (-5.5,-5.5*\h/6) rectangle (5.5,5.5*\h/6);
  \foreach \x in {-5,...,5}
   \draw (\x,-5) -- ++(0,10);
  \foreach \x in {-3,...,2}{
   \foreach \y in {-2,...,2}{
    \draw (2*\x,2*\y) -- ++(1,0);
    \draw (2*\x+1,2*\y+1) -- ++(1,0);
    \filldraw (2*\x,2*\y) circle (2.5pt);
    \filldraw (2*\x+1,2*\y) circle (2.5pt);
    \filldraw (2*\x+1,2*\y+1) circle (2.5pt);
    \filldraw (2*\x+2,2*\y+1) circle (2.5pt);
   }
  }
  \filldraw[white] (0,0,0) circle (5pt);
  \draw[line width=1.5pt] (0,0,0) circle (5pt);
 \end{scope}
% \draw (4,0) node {$\rightsquigarrow$};
\end{tikzpicture}
 \caption{The moments $W_3(0;2n)$ count paths on the honeycomb lattice returning to the origin after $2n$~steps.
 From a counting point of view, the honeycomb lattice (on the left) is equivalent to the lattice made up of bricks (on the right).}
\label{fig:d=2,m=3}
\end{figure}

  The deformed lattice is exactly the graph $\G_m(0)$, which gives an explanation of Theorem~\ref{thm:d=2_interpretation} based on Proposition~\ref{prop:d=2_pre-interpretation}.
  In the following, we provide a rigorous justification for this claim.

\begin{proof}[Proof of Theorem~\ref{thm:d=2_interpretation}]
  Consider a transformation $\psi \colon \mathbb{Z}^{m+1} \to \mathbb{Z}^m$ defined by
  \[
    \psi(x_0,x_1,x_2,\ldots,x_m) = (x_1-x_0,x_2,\ldots,x_m)
    \, .
  \]
  The transformation $\psi$ provides a bijection between $V_{m+1}$ and $\mathbb{Z}^m$ whose inverse is given by
  \[
    \psi^{-1}(\tilde{x}_1,\tilde{x}_2,\ldots,\tilde{x}_m)
    =
    \left\{\begin{array}{ll}
      \big(-\tilde{s}/2,\tilde{x}_1-\tilde{s}/2,\tilde{x}_2,\ldots,\tilde{x}_m\big) & 
      \mbox{if } \tilde{s} \mbox{ is even}\\
      \big(-\tilde{s}/2,\tilde{x}_1+(1-\tilde{s})/2,\tilde{x}_2,\ldots,\tilde{x}_m\big) & 
      \mbox{if } \tilde{s} \mbox{ is odd} \, , \\
    \end{array}
    \right.
  \]
  where $\tilde{s} = \tilde{x}_1 + \ldots + \tilde{x}_m$.
  Extend $\psi$ to the set of edges $E_{m+1}$ by the following natural rule: if two vertices $\x,\y\in V_{m+1}$ are joined by an edge, then so are their images $\psi(\x)$ and $\psi(\y)$.
  Taking into account that the vertices $\x$ and $\y$ are joined by an edge if and only if $|\x-\y|=1$, we conclude that the image of the graph $(V_{m+1},E_{m+1})$ under the transformation $\psi$ is the graph $\G_m(0)$.
  Therefore, Theorem~\ref{thm:d=2_interpretation} follows due to Proposition~\ref{prop:d=2_pre-interpretation}.    
\end{proof}

\begin{corollary}\label{cor:d=2_matrix_form}
  For any two integers $m,n\in\mathbb{Z}_{\ge0}$, the moment $W_{m+1}(0;2n)$ is equal to the sum of the entries of the $n$th row of the matrix $M^m$, where $M=A(0)$ is defined by~\eqref{eq:A(nu)matrix}.
\end{corollary}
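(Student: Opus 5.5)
We will derive the matrix form directly from Proposition~\ref{prop:d=2_pre-interpretation}, or equivalently from the constant-term representation
\[
  W_{m+1}(0;2n)=[A_0^0\cdots A_m^0]\Big(\big(A_0+\ldots+A_m\big)\big(A_0^{-1}+\ldots+A_m^{-1}\big)\Big)^{n}.
\]
We do this by induction on $m$, peeling off one variable at a time. The aim is a transfer relation: let $c_m(n,j)$ denote the number of words encoding the relevant paths that have a prescribed number $j$ of ``pairs'' involving the variables $A_1,\ldots,A_m$. We will show $c_m(n,\cdot)$ is the $n$th row of $A(0)^m$. Summing over $j$ then gives the row sum.

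First, make the statistic precise. Write the word $A_{i_1}A_{j_1}^{-1}\cdots A_{i_n}A_{j_n}^{-1}$ as two sequences $(i_1,\ldots,i_n)$ and $(j_1,\ldots,j_n)$. Single out the letter $A_0$ versus the rest. Let $r$ be the number of positions $k$ with $i_k\neq 0$, and $\ell$ the number of positions $k$ with $j_k\neq0$. Since the $A_0$'s must cancel, the constant-term condition forces $r=\ell$. Encoding $A_k$ with $k\neq0$ as $U$ in odd positions, $A_k^{-1}$ with $k\ne 0$ as $D$ in even positions, and $A_0^{\pm1}$ as $H$, the skeleton is a word of $\P_{2n,r,r}$. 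By \eqref{eq:D-relation} there are $\binom{n}{r}^2=A_{nr}(0)$ such skeletons. The remaining data are the $r$ letters $A_{i}$ and $r$ letters $A_{j}^{-1}$ with $i,j\in\{1,\ldots,m\}$, read in order. Because the variables commute and the cancellation condition only involves multisets, the number of valid fillings equals the number of constant-term contributions of $\big((A_1+\ldots+A_m)(A_1^{-1}+\ldots+A_m^{-1})\big)^{r}$, i.e. $W_m(0;2r)$. This yields the recursion
\[
  W_{m+1}(0;2n)=\sum_{r=0}^{n}\binom{n}{r}^2\,W_m(0;2r),
\]
with base case $W_1(0;2r)=1$, the all-ones vector.

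Iterating the recursion gives $W_{m+1}(0;2n)=\big(A(0)^m\mathbf 1\big)_n$, which is exactly the row sum of the $n$th row of $M^m$. The case $m=0$ is consistent with $M^0=I$.

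The main point requiring care is the factorization step. We must check that the fillings of the $U$ and $D$ slots by indices in $\{1,\ldots,m\}$ that produce a constant term correspond bijectively to terms of the $m$-variable expansion of length $2r$. This holds because, after deleting the $H$-letters, the $U$'s and $D$'s alternate in the sense of pairing the $k$th $U$ with the $k$th $D$ into the $k$th factor $(A_1+\ldots+A_m)(A_1^{-1}+\ldots+A_m^{-1})$. Cancellation of the $A_1,\ldots,A_m$ is unaffected by the interleaved $A_0^{\pm1}$, since all variables commute. A positivity (prefix) condition is not needed here, unlike in Lemma~\ref{lem:pre_bijection}.
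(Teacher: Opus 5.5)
Your argument is correct, and it is organized differently from the paper's.

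The paper goes through Theorem~\ref{thm:d=2_interpretation}. It encodes excursions on $\G_m(0)$ as words over $\{U,D,R_2,L_2,\ldots,R_m,L_m\}$ and counts them in one stroke as $\sum_{\ell_1+\cdots+\ell_m=n}\binom{n}{\ell_1,\ldots,\ell_m}^2\binom{2\ell_1}{\ell_1}$. Only then does it produce the matrix product, by splitting the multinomial into the telescoping product $\prod_k\binom{n_k}{n_{k-1}}^2$ and expanding $\binom{2\ell_1}{\ell_1}=\sum_j\binom{\ell_1}{j}^2$.

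You stay with the constant-term formula and peel off one variable. This gives the transfer relation $W_{m+1}(0;2n)=\sum_r\binom{n}{r}^2W_m(0;2r)$, i.e.\ $w_{m+1}=Mw_m$ with $w_1=\mathbf{1}$. The paper's telescoping product is exactly your recursion unrolled.

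What your route buys:
\begin{itemize}
\item It is shorter and needs neither the lattice $\G_m(0)$ nor the multinomial bookkeeping.
\item It handles $m=0$ and $n=0$ uniformly.
\item It has the same shape as the paper's inductive proof of Theorem~\ref{thm:d=4_interpretation}. There $|\hat{\P}_{2n,k,k}|=N(n+1,k+1)$ plays the role that $|\P_{2n,r,r}|=\binom{n}{r}^2$ plays in yours, so the parallel between $\nu=0$ and $\nu=1$ is more visible.
\end{itemize}
In exchange, the paper's route yields the explicit multinomial sum and ties the matrix form to the brick-wall word model that the paper is promoting.

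One cosmetic point: your opening sentence about $c_m(n,j)$ is not what you end up proving, and as worded it is off by a reversal. If $j$ counts pairs using any of $A_1,\ldots,A_m$, then $c_m(n,j)=\binom{n}{j}^2W_m(0;2j)=[M^m]_{n,n-j}$. The statistic that indexes the $n$th row of $M^m$ in order is the number of pairs using a single variable ($A_m$, or $A_0$ by symmetry). Only row sums enter the proof, so this does not affect it, and you can simply drop that sentence.
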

\begin{proof}
  For $m=0$, as well as for $n=0$, the statement holds trivially, since $W_{1}(0;2n)=W_{m+1}(0;0)=1$.
  For $m,n>0$, we apply Theorem~\ref{thm:d=2_interpretation}.
  To this end, let us encode a closed path on $\G_m(0)$ that starts and ends at the origin by a word over the alphabet
  \(
    \{U,D,R_2,L_2,\ldots,R_m,L_m\}.
  \)
  We use the following correspondence:
  \begin{itemize}
    \item
      the vector $\e_1$ is replaced by the letter $U$,
    \item
      the vector $-\e_1$ is replaced by the letter $D$,
    \item
      the vector $\e_k$ is replaced by the letter $R_k$ for $2 \leqslant k \leqslant m$,
    \item
      the vector $-\e_k$ is replaced by the letter $L_k$ for $2 \leqslant k \leqslant m$.
  \end{itemize}
  The structure of the graph $\G_m(0)$ imposes the following conditions on the considered words:
  \begin{itemize}
    \item 
      all $R_k$'s are in odd positions,
    \item 
      all $L_k$'s are in even positions,
    \item 
      $\#U = \#D$ and $\#R_k=\#L_k$ for $2 \leqslant k \leqslant m$.
  \end{itemize}
  Our goal is to establish the number of such words of length $2n$ for any positive integer $n$.

  For fixed numbers $\ell_1=\#U$ and $\ell_k=\#R_k$, $2 \leqslant k \leqslant m$, satisfying the condition $\ell_1+\ldots+\ell_m=n$, the number of words is equal to
  \[
    \binom{n}{\ell_1,\ldots,\ell_m}^2
    \binom{2\ell_1}{\ell_1}
    =
    \binom{n}{\ell_m}^2
    \binom{n-\ell_m}{\ell_{m-1}}^2
    \ldots
    \binom{\ell_1+\ell_2+\ell_3}{\ell_3}^2
    \binom{\ell_1+\ell_2}{\ell_2}^2
    \binom{2\ell_1}{\ell_1}
  \]
  Indeed, first we distribute the $U$'s and $R_k$'s across the $n$ odd positions, then we distribute the $D$'s and  $L_k$'s across the $n$ even positions, and finally we re-distribute $U$'s and $D$'s across the $2\ell_1$ positions that remain.
  Hence, we have
  \[
    W_{m+1}(0;2n)
    =
    \sum\limits_{\ell_1+\ldots+\ell_m=n}
    \binom{n}{\ell_1,\ldots,\ell_m}^2
    \binom{2\ell_1}{\ell_1}
    \, .
  \]
  Denoting $n_k=\ell_1+\ldots+\ell_k$ for $2 \leqslant k \leqslant m$ and taking into account that 
  \(
    \binom{2\ell_1}{\ell_1} = \binom{\ell_1}{0}^2+\ldots+\binom{\ell_1}{\ell_1}^2,
  \)
  as shown in relation~\eqref{eq:sum-A(0)}, we can rewrite the above expression in the form
  \[
    W_{m+1}(0;2n)
    =
    \sum\limits_{0 \leqslant n_1 \leqslant \ldots \leqslant n_m=n}
    \binom{2\ell_1}{\ell_1}
    \prod\limits_{k=2}^m
    \binom{n_k}{n_{k-1}}^2
    =
    \sum\limits_{0 \leqslant n_0 \leqslant \ldots \leqslant n_m=n}
    \prod\limits_{k=1}^m
    \binom{n_k}{n_{k-1}}^2
    \, .
  \]
  Finally, since $\binom{i}{j}=0$ for $i<j$, we obtain a matrix form expression:
  \[
    W_{m+1}(0;2n)
    =
    \sum\limits_{n_0=0}^n
    \ldots
    \sum\limits_{n_{m-1}=0}^n
    \prod\limits_{k=1}^m
    \binom{n_k}{n_{k-1}}^2
    =
    \sum\limits_{n_0=0}^n
    \ldots
    \sum\limits_{n_{m-1}=0}^n
    M_{n_mn_{m-1}} \ldots M_{n_1n_0}
    =
    \sum\limits_{n_0=0}^n
    [M^m]_{nn_0}
    \, .
  \]
  This completes the proof.
\end{proof}

\subsection{Four dimensional case}
\label{subsec:d=4}

\begin{proof}[Proof of Theorem~\ref{thm:d=4_interpretation}]
  The main idea of the proof is to verify that the number of excursions of length $2(n+1)$ on the graph $\G_m(1)$ is equal to the sum of the entries of the $n$th row of the matrix $N^m$, where $N=A(1)$ is defined by~\eqref{eq:A(nu)matrix}, and hence, coincides with the moment $W_{m+1}(1;2n)$.
  This is to be done by induction on $m$ with the help of Lemma~\ref{lem:pre_bijection}.

  Let us check the base case.
  For $m=1$, the graph $\G_1(1)$ is a half-line, and the number of excursions on it is counted by the Catalan numbers:
  \[
    C_{n+1} = \sum_{k=0}^n \Nar[n+1][k+1] = \sum_{k=0}^n N_{nk} = W_2(1;2n)
    \, ;
  \]
  see relation~\eqref{eq:sum-A(1)}.
  For $m=2$, we use Lemma~\ref{lem:pre_bijection}.
  In this case, the graph $\G_2(1)$ is a quarter-plane shown in Figure~\ref{fig:G_m(1)}~b), and the paths on $\G_2(1)$ are encoded by words over the alphabet $\{U,D,R,L\}$.
  Since the first and the last step of an excursion of length $2(n+1)$ are uniquely defined, it is represented by a word $w=Uw'D$ for some word $w'$ of length $2n$.
  To enumerate all possible words $w'$, we first fix the number $k$ of $U$'s (and $D$'s).
  Hence, the number of $R$'s (and $L$'s) is equal to $(n-k)$,
  and the number of ways to distribute $R$'s and $L$'s in $w'$, due to Lemma~\ref{lem:pre_bijection}, is $\Nar[n+1][n-k+1]=\Nar[n+1][k+1]$.
  After that, we fill the remaining positions in the word with $U$'s and $D$'s, which can be done in $C_{n+1}$ ways.
  Finally, we take the sum over all $k\leqslant n$ and rewrite the formula as
  \[
    \sum\limits_{k=0}^n
      C_{k+1}\Nar[n+1][k+1]
      =
%      \sum_{k=0}^n\sum_{\ell=0}^k \Nar[n+1][k+1] \Nar[k+1][\ell+1]
      \sum_{k=0}^n\sum_{\ell=0}^k N_{nk} N_{k\ell}
      =
      \sum_{\ell=0}^n\sum_{k=0}^n N_{nk} N_{k\ell}
      =
      \sum_{\ell=0}^n [N^2]_{n\ell}
      =
 	   W_3(1;2n)
    \, .
  \]
  Here, we use relation~\eqref{eq:sum-A(1)} and the fact that the matrix $A(\nu)$ is lower triangular.

  Now, proceed with the induction step.
  In order to apply Lemma~\ref{lem:pre_bijection}, similarly to Corollary~\ref{cor:d=2_matrix_form} we interpret a path of length $2(n+1)$ on the graph $\G_m(1)$ as a word $w$ of the same length over the alphabet $\{U,D,R_2,L_2,\ldots,R_m,L_m\}$.
%  The letters $U$ and $D$ represent two opposite steps taken along the $x_1$-axis, while $R_k$ and $L_k$ correspond to steps along the $x_k$-axis that, respectively, increase and decrease the $x_k$-coordinate (here, $2\leqslant k \leqslant m$).
  Again, as for $m=2$, we have $w=Uw'D$ for some word $w'$ of length $2n$.
  Due to the structure of the graph~$\G_m(1)$, the word $w'$ satisfies the following four conditions.
  \begin{enumerate}
    \item
      The number of occurrences of $U$'s and $D$'s in $w'$ is equal, as well as those of $R_k$'s and $L_k$'s for all $k$.
    \item 
      For any $k$, after erasing all letters $R_{k+1},L_{k+1},\ldots,R_m,L_m$ in the word $w'$, the letters $R_k$ occur in odd positions in the remaining word.
    \item 
      For any $k$, after erasing all letters $R_{k+1},L_{k+1},\ldots,R_m,L_m$ in the word $w'$, the letters $L_k$ occur in even positions in the remaining word.
    \item 
      In each prefix of $w'$, for every $2\leqslant k \leqslant m$, the number of $R_k$'s is not less than the number of $L_k$'s.
  \end{enumerate}
  The first of these conditions guarantees that the considered path returns to the origin.
  The last condition certifies that the path remains in the hyperoctant $\mathbb{Z}^m_{\geqslant0}$, so it is indeed an excursion.
  The other two conditions come from the inner structure of the graph $\G_m(1)$.

  Let $k$ be the number of occurrences of the letter $R_m$ in the word $w'$.
  Then, according to Lemma~\ref{lem:pre_bijection}, the number of ways to choose positions for $R_m$'s and $L_m$'s is $\Nar[n+1][k+1]=\Nar[n+1][n-k+1]=N_{n(n-k)}$.
  On the other hand, it follows from the induction hypothesis that the number of ways to fill the rest of the word with other letters is $\sum_{\ell=0}^{n-k}[N^{m-1}]_{(n-k)\ell}$.
  Therefore, since the matrix $N^{m-1}$ is lower triangular, the total number of possible words $w'$ is equal to
  \[
    \sum_{k=0}^n N_{n(n-k)} \sum_{\ell=0}^{n-k} [N^{m-1}]_{(n-k)\ell}
    =
    \sum_{\ell=0}^{n}\sum_{k=0}^n N_{n(n-k)} [N^{m-1}]_{(n-k)\ell}
    =
    \sum_{\ell=0}^{n} [N^{m}]_{n\ell}
    =
 	  W_{m+1}(1;2n)
    \, ,
  \]
  which completes the proof.
\end{proof}

\section{Bijection}
\label{sec:bijection}

  Lemma~\ref{lem:pre_bijection} suggests that there should be a bijection between Motzkin paths of a certain kind and Dyck paths with a predefined number of peaks.
  This section is devoted to an iterative algorithm that allows us to obtain such a bijection presented by the following proposition.

\begin{proposition}\label{prop:bijection}
  For any non-negative integers $n$ and $k$, there exists a bijection between the set $\hat{\P}_{2n,k,k}$ of Motzkin paths (of length $2n$ whose $k$ up steps are in odd positions and $k$ down steps are in even positions) and the set of Dyck paths of size $2(n+1)$ with $k+1$ peaks.
\end{proposition}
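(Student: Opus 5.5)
Since Lemma~\ref{lem:pre_bijection} already shows that both sets have cardinality $\Nar[n+1][k+1]$, existence of a bijection is immediate. The point of the proposition is therefore to give an explicit map. I would build it by composing two cyclic-lemma constructions, one on each side, both passing through a common ``cyclic'' set.

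\textbf{Motzkin side.} By the proof of Lemma~\ref{lem:pre_bijection}, the map $w\mapsto UHw$ identifies $\hat{\P}_{2n,k,k}$ with the set of rotation classes of $\P_{2n+2,k+1,k}$. Here the rotations are by an even number of positions, so that parity is preserved; each class has exactly $n+1$ elements, and $UHw$ is the unique representative all of whose nonempty prefixes contain more $U$'s than $D$'s.

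Next I encode a word of $\P_{2n+2,k+1,k}$ by the pair of sets $(A,B)$, where $A\subseteq\{1,\dots,n+1\}$ with $|A|=k+1$ records which odd slots carry $U$, and $B$ with $|B|=k$ records which even slots carry $D$. Rotation by two positions acts as simultaneous translation of $A$ and $B$ modulo $n+1$.

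\textbf{Dyck side.} Dyck paths of length $2(n+1)$ with $k+1$ peaks are determined by their peak data. For each peak, record the pair (number of $U$'s before it, number of $D$'s before it). The standard proof of the Narayana formula (Stanley, Exercise~6.36) uses exactly this data. One encodes the $U$-run lengths and $D$-run lengths by two subsets, $A'\subseteq\{1,\dots,n+1\}$ of size $k+1$ (the positions ending each ascent) and $B'$ of size $k$ (the positions ending each descent except the last). The Dyck condition is then equivalent to a ballot-type interleaving of $A'$ and $B'$. Dropping the Dyck condition and taking cyclic shifts of the pair gives $n+1$ candidates, exactly one of which is Dyck, by the same cycle lemma.

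\textbf{Composition.} With both sides in hand, the bijection is:
\begin{enumerate}
\item send $w$ to the class of $UHw$;
\item read that class as the pair $(A,B)$ up to simultaneous rotation;
\item select the unique rotation of the pair that satisfies the Dyck ballot condition;
\item output the Dyck path with those ascent and descent endpoints.
\end{enumerate}

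\textbf{Main obstacle and fallback.} The hardest part is making the two ballot conditions compatible, so that the rotation actions on the two sides really correspond. The Motzkin condition compares counts of $U$'s in odd slots against $D$'s in even slots, while the Dyck condition compares cumulative ascent and descent lengths. I expect a suitable dictionary to exist, such as complementing $B$ or reversing one coordinate, but finding it will take trial and error.

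If no clean dictionary emerges, I would switch to a direct iterative algorithm that avoids the cyclic detour. Scan $w$ in pairs of letters, which come in the four types $UD$, $UH$, $HD$ and $HH$, and grow a Dyck path step by step:
\begin{itemize}
\item a $U$ in an odd slot starts a new peak;
\item a $D$ in an even slot closes a descent;
\item $H$'s lengthen the current ascent or descent.
\end{itemize}
Invertibility would then be proved by reading the Dyck path back pair by pair. Peak count $k+1$ and length $2(n+1)$ would be checked by induction on $n$, with the extra peak and the extra pair $UD$ coming from the prefix $UH$ and the final $D$.
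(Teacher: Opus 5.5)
Your opening sentence already proves the statement as written. Lemma~\ref{lem:pre_bijection} and the classical count of Dyck paths by peaks give equal cardinalities. The paper itself also relies on Lemma~\ref{lem:pre_bijection} to turn its injective algorithm into a bijection.

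Your explicit map in steps 1--4 is also correct, and the ``main obstacle'' you name is not real. On both sides the objects are pairs in $\binom{\mathbb{Z}_{n+1}}{k+1}\times\binom{\mathbb{Z}_{n+1}}{k}$, acted on by the \emph{same} simultaneous translation. So the orbits are literally the same sets. Sending the unique Motzkin-good element of an orbit to its unique Dyck-good element is therefore a bijection, whatever the relation between the two ballot conditions.

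What you actually owe is the Dyck-side cycle lemma, which you only assert ``by the same cycle lemma''. Stated precisely: with $A'$ the ascent-end $U$-counts and $B'$ the descent-end $D$-counts (excluding the last), the pair $(A',B')$ encodes a Dyck path iff $|A'\cap[1,t]|\leqslant|B'\cap[1,t]|$ for $1\leqslant t\leqslant n$. This forces $n+1\in A'$ and $n+1\notin B'$, and it is equivalent to all valley heights being nonnegative.

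The cleanest proof is your own ``reversing'' idea. Because $|A|=k+1$ and $|B|=k$, the reflection $t\mapsto n+2-t$ applied to both sets turns the Motzkin condition $|A\cap[1,t]|>|B\cap[1,t]|$ ($1\leqslant t\leqslant n+1$) for $UHw$ into exactly this Dyck condition. Reflection conjugates translation by $c$ into translation by $-c$. Hence the Dyck-side cycle lemma follows from the Motzkin one, with ties and empty positions handled automatically. It also follows that the reflection alone, with no rotation, is already a bijection from $\hat{\P}_{2n,k,k}$ onto Dyck paths with $k+1$ peaks.

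Your fallback algorithm is not usable as it stands. Deciding which $H$'s lengthen ascents and which lengthen descents is the entire difficulty, and you leave it unspecified.

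Compared with the paper: the paper builds a recursive, level-by-level procedure. It inserts $U$ before the first $H$ on the axis and $D$ at the end, then splits the path into prime paths. In each prime path it redistributes the $H$'s via $(x,y)\mapsto(u,v)$ and converts them into $U$- and $D$-runs, one new peak per iteration. Correctness comes from counting peaks, an injectivity argument, and the cardinality in Lemma~\ref{lem:pre_bijection} for surjectivity. That construction works inside the decomposition into prime paths and shows where each peak comes from. Your cyclic (or reflection) route is global rather than recursive. In exchange, bijectivity is structural rather than numerical, the verification is short and complete, and it reproves both Narayana counts at once.
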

\begin{proof}
  The idea is to pass from a Motzkin path to a Dyck path by inserting two additional steps ($U$ and~$D$) and iteratively transforming the path from bottom to top.
  Thus, the algorithm that realizes this idea consists of two stages.
  The first stage is preliminary: we identify the first horizontal step $H$ that belongs to the $x$-axis and insert a step~$U$ just in front of it.
  At the same time, we insert a step~$D$ in the end of the path; see Figure~\ref{fig:bijection_algorithm}~b).
  If there are no horizontal steps on the $x$-axis, then we insert both steps $U$ and $D$ in the end of the path.
  In any case, the result of the preliminary stage is a Motzkin path of length $2(n+1)$ that contains no horizontal step on the $x$-axis.

  The second stage of the algorithm is iterative.
  We decompose our Motzkin path into a sequence of \emph{prime paths}, that is, parts of the form $U \alpha D$ where $\alpha$ is a smaller Motzkin path.
  In other words, a prime path touches the $x$-axis only twice: in the beginning and in the end.
  Each prime path $U \alpha D$ is then treated separately in the following way.
  If $\alpha$ consists only of horizontal steps, then we replace the first half of them by up steps, and the second half of them by down steps:
  \[
    U\underbrace{H \ldots H}_{2\ell}D
    \quad\rightsquigarrow\quad
    \underbrace{U \ldots U}_{\ell+1}\underbrace{D \ldots D}_{\ell+1}
    \, .
  \]
  Otherwise, the prime path under consideration can be represented in the form
  \[
    U \alpha D =
    U\underbrace{H \ldots H}_{x}
    U \beta_1 D \underbrace{H \ldots H}_{2\ell_1}
    U \beta_2 D \ldots U \beta_{s-1} D
    \underbrace{H \ldots H}_{2\ell_{s-1}} U \beta_s D
    \underbrace{H \ldots H}_{y}D
    \, ,
  \]
  where $\beta_i$ are some Motzkin paths.
  Note that, due to the fact that the initial path belongs to $\hat{\P}_{2n,k,k}$, in the typical case the values of $x$ and $y$ are odd.
  The only exception concerns the prime path whose first and last letters have been inserted in the preliminary stage of the algorithm: in this case, both $x$ and $y$ are even.
  In any case, we define two integers $u$ and $v$ to be
  \[
    u = \left\{\begin{array}{ll}
      x-1 & \mbox{if } x\leqslant y\\
      y & \mbox{if } x>y 
    \end{array}\right.
    \qquad\mbox{and}\qquad
    v = \left\{\begin{array}{ll}
      \frac{y-x}{2} + 1 & \mbox{if } x\leqslant y\\
      \frac{x-y}{2} & \mbox{if } x>y \, ,
    \end{array}\right.
  \]
  so that $2(u+v) = x+y$.
  This allows us to rearrange the occurrences of $H$ in our prime path into
  \[
    U\underbrace{H \ldots H}_{u+(\ell_1+\ldots+\ell_{s-1})+2v}
    U \beta_1 D \underbrace{H \ldots H}_{\ell_1}
    U \beta_2 D \ldots U \beta_{s-1} D
    \underbrace{H \ldots H}_{\ell_{s-1}} U \beta_s D
    \underbrace{H \ldots H}_{u}D
    \, ,
  \]
  and then replace the first $u+(\ell_1+\ldots\ell_{s-1})+v$ horizontal steps $H$ by up steps $U$ and the remaining horizontal steps $H$ by down steps $D$:
  \[
%    \underbrace{U \ldots U}_{u+(\ell_1+\ldots+\ell_{s-1})+v+1}
    \underbrace{U \ldots U}_{u+1} \underbrace{U \ldots U}_{\ell_1+\ldots+\ell_{s-1}}
    \underbrace{U \ldots U}_{v} \underbrace{D \ldots D}_{v}
    U \beta_1 D \underbrace{D \ldots D}_{\ell_1}
    U \beta_2 D \ldots U \beta_{s-1} D
    \underbrace{D \ldots D}_{\ell_{s-1}} U \beta_s D
    \underbrace{D \ldots D}_{u+1}
    \, .
  \]
  What remains is to apply the same iteration to the prime paths $U \beta_i D$ and so on; see Figure~\ref{fig:bijection_algorithm}.
  
\begin{figure}[ht!]
\centering
\begin{tikzpicture}[scale=1, line width=.5pt]
 \begin{scope}[yshift=0]
  \draw[->,dashed] (0pt,0pt) -- (0,35pt);
  \draw[->,dashed] (0pt,0pt) -- (230pt,0pt);
  \draw (10pt,30pt) node {a)};
  \coordinate (a0) at (0pt,0pt);
  \coordinate (a1) at (10pt,10pt);
  \coordinate (a2) at (20pt,10pt);
  \coordinate (a3) at (30pt,20pt);
  \coordinate (a4) at (40pt,10pt);
  \coordinate (a5) at (50pt,10pt);
  \coordinate (a6) at (60pt,10pt);
  \coordinate (a7) at (70pt,20pt);
  \coordinate (a8) at (80pt,10pt);
  \coordinate (a9) at (90pt,10pt);
  \coordinate (a10) at (100pt,0pt);
  \coordinate (a11) at (110pt,0pt);
  \coordinate (a12) at (120pt,0pt);
  \coordinate (a13) at (130pt,10pt);
  \coordinate (a14) at (140pt,10pt);
  \coordinate (a15) at (150pt,20pt);
  \coordinate (a16) at (160pt,20pt);
  \coordinate (a17) at (170pt,20pt);
  \coordinate (a18) at (180pt,10pt);
  \coordinate (a19) at (190pt,10pt);
  \coordinate (a20) at (200pt,0pt);
  \draw (a0) -- (a1) -- (a2) -- (a3) -- (a4) -- (a5) -- (a6) -- (a7) -- (a8) -- (a9) -- (a10) -- (a11) -- (a12) -- (a13) -- (a14) -- (a15) -- (a16) -- (a17) -- (a18) -- (a19) -- (a20);
  \foreach \p in {a0,a1,a2,a3,a4,a5,a6,a7,a8,a9,a10,a11,a12,a13, a14,a15,a16,a17,a18,a19,a20}
   \filldraw [black] (\p) circle (1.5pt);
  \foreach \x in {5pt,25pt,65pt,125pt,145pt}{
   \draw (\x,-10pt) node {$U$};
  }
  \foreach \x in {35pt,75pt,95pt,175pt,195pt}{
   \draw (\x,-10pt) node {$D$};
  }
  \foreach \x in {15pt,45pt,55pt,85pt,105pt,115pt,135pt,155pt,165pt,185pt}{
   \draw (\x,-10pt) node {$H$};
  }
 \end{scope}
 \begin{scope}[xshift=250]
  \draw[->,dashed] (0pt,0pt) -- (0,35pt);
  \draw[->,dashed] (0pt,0pt) -- (230pt,0pt);
  \draw (10pt,30pt) node {b)};
  \coordinate (a0) at (0pt,0pt);
  \coordinate (a1) at (10pt,10pt);
  \coordinate (a2) at (20pt,10pt);
  \coordinate (a3) at (30pt,20pt);
  \coordinate (a4) at (40pt,10pt);
  \coordinate (a5) at (50pt,10pt);
  \coordinate (a6) at (60pt,10pt);
  \coordinate (a7) at (70pt,20pt);
  \coordinate (a8) at (80pt,10pt);
  \coordinate (a9) at (90pt,10pt);
  \coordinate (a10) at (100pt,0pt);
  \coordinate (a11) at (110pt,10pt);
  \coordinate (a12) at (120pt,10pt);
  \coordinate (a13) at (130pt,10pt);
  \coordinate (a14) at (140pt,20pt);
  \coordinate (a15) at (150pt,20pt);
  \coordinate (a16) at (160pt,30pt);
  \coordinate (a17) at (170pt,30pt);
  \coordinate (a18) at (180pt,30pt);
  \coordinate (a19) at (190pt,20pt);
  \coordinate (a20) at (200pt,20pt);
  \coordinate (a21) at (210pt,10pt);
  \coordinate (a22) at (220pt,0pt);
  \draw (a0) -- (a1) -- (a2) -- (a3) -- (a4) -- (a5) -- (a6) -- (a7) -- (a8) -- (a9) -- (a10) -- (a11) -- (a12) -- (a13) -- (a14) -- (a15) -- (a16) -- (a17) -- (a18) -- (a19) -- (a20) -- (a21) -- (a22);
  \draw[blue, line width=1.5pt] (a10) -- (a11);
  \draw[blue, line width=1.5pt] (a21) -- (a22);
  \foreach \p in {a0,a1,a2,a3,a4,a5,a6,a7,a8,a9,a10,a11,a12,a13, a14,a15,a16,a17,a18,a19,a20,a21,a22}
   \filldraw [black] (\p) circle (1.5pt);
  \foreach \x in {5pt,25pt,65pt,135pt,155pt}{
   \draw (\x,-10pt) node {$U$};
  }
  \foreach \x in {35pt,75pt,95pt,185pt,205pt}{
   \draw (\x,-10pt) node {$D$};
  }
  \foreach \x in {15pt,45pt,55pt,85pt,115pt,125pt,145pt,165pt,175pt,195pt}{
   \draw (\x,-10pt) node {$H$};
  }
  \draw[blue] (105pt,-10pt) node {\bf\emph{U}};
  \draw[blue] (215pt,-10pt) node {\bf\emph{D}};
 \end{scope}
 \begin{scope}[yshift=-60]
  \draw[->,dashed] (0pt,0pt) -- (0,35pt);
  \draw[->,dashed] (0pt,0pt) -- (230pt,0pt);
  \draw (10pt,30pt) node {c)};
  \coordinate (a0) at (0pt,0pt);
  \coordinate (a1) at (10pt,10pt);
  \coordinate (a2) at (20pt,20pt);
  \coordinate (a3) at (30pt,30pt);
  \coordinate (a4) at (40pt,20pt);
  \coordinate (a5) at (50pt,30pt);
  \coordinate (a6) at (60pt,20pt);
  \coordinate (a7) at (70pt,10pt);
  \coordinate (a8) at (80pt,20pt);
  \coordinate (a9) at (90pt,10pt);
  \coordinate (a10) at (100pt,0pt);
  \coordinate (a11) at (110pt,10pt);
  \coordinate (a12) at (120pt,20pt);
  \coordinate (a13) at (130pt,10pt);
  \coordinate (a14) at (140pt,20pt);
  \coordinate (a15) at (150pt,20pt);
  \coordinate (a16) at (160pt,30pt);
  \coordinate (a17) at (170pt,30pt);
  \coordinate (a18) at (180pt,30pt);
  \coordinate (a19) at (190pt,20pt);
  \coordinate (a20) at (200pt,20pt);
  \coordinate (a21) at (210pt,10pt);
  \coordinate (a22) at (220pt,0pt);
  \draw (a0) -- (a1) -- (a2) -- (a3) -- (a4) -- (a5) -- (a6) -- (a7) -- (a8) -- (a9) -- (a10) -- (a11) -- (a12) -- (a13) -- (a14) -- (a15) -- (a16) -- (a17) -- (a18) -- (a19) -- (a20) -- (a21) -- (a22);
  \draw[blue, line width=1.5pt] (a1) -- (a3) -- (a4);
  \draw[blue, line width=1.5pt] (a6) -- (a7);
  \draw[blue, line width=1.5pt] (a11) -- (a12) -- (a13);
  \foreach \p in {a0,a1,a2,a3,a4,a5,a6,a7,a8,a9,a10,a11,a12,a13, a14,a15,a16,a17,a18,a19,a20,a21,a22}
   \filldraw [black] (\p) circle (1.5pt);
  \foreach \x in {5pt,45pt,75pt,105pt,135pt,155pt}{
   \draw (\x,-10pt) node {$U$};
  }
  \foreach \x in {55pt,85pt,95pt,185pt,205pt,215pt}{
   \draw (\x,-10pt) node {$D$};
  }
  \foreach \x in {145pt,165pt,175pt,195pt}{
   \draw (\x,-10pt) node {$H$};
  }
  \foreach \x in {15pt,25pt,115pt}{
   \draw[blue] (\x,-10pt) node {\bf\emph{U}};
  }
  \foreach \x in {35pt,65pt,125pt}{
   \draw[blue] (\x,-10pt) node {\bf\emph{D}};
  }
  \foreach \p in {a3,a12}{
   \filldraw [red] (\p) circle (2.5pt);
   \draw (\p) circle (2.5pt);
  }
 \end{scope}
 \begin{scope}[xshift=250, yshift=-60]
  \draw[->,dashed] (0pt,0pt) -- (0,35pt);
  \draw[->,dashed] (0pt,0pt) -- (230pt,0pt);
  \draw (10pt,30pt) node {d)};
  \coordinate (a0) at (0pt,0pt);
  \coordinate (a1) at (10pt,10pt);
  \coordinate (a2) at (20pt,20pt);
  \coordinate (a3) at (30pt,30pt);
  \coordinate (a4) at (40pt,20pt);
  \coordinate (a5) at (50pt,30pt);
  \coordinate (a6) at (60pt,20pt);
  \coordinate (a7) at (70pt,10pt);
  \coordinate (a8) at (80pt,20pt);
  \coordinate (a9) at (90pt,10pt);
  \coordinate (a10) at (100pt,0pt);
  \coordinate (a11) at (110pt,10pt);
  \coordinate (a12) at (120pt,20pt);
  \coordinate (a13) at (130pt,10pt);
  \coordinate (a14) at (140pt,20pt);
  \coordinate (a15) at (150pt,30pt);
  \coordinate (a16) at (160pt,20pt);
  \coordinate (a17) at (170pt,30pt);
  \coordinate (a18) at (180pt,30pt);
  \coordinate (a19) at (190pt,30pt);
  \coordinate (a20) at (200pt,20pt);
  \coordinate (a21) at (210pt,10pt);
  \coordinate (a22) at (220pt,0pt);
  \draw (a0) -- (a1) -- (a2) -- (a3) -- (a4) -- (a5) -- (a6) -- (a7) -- (a8) -- (a9) -- (a10) -- (a11) -- (a12) -- (a13) -- (a14) -- (a15) -- (a16) -- (a17) -- (a18) -- (a19) -- (a20) -- (a21) -- (a22);
  \draw[blue, line width=1.5pt] (a14) -- (a15) -- (a16);
  \foreach \p in {a0,a1,a2,a3,a4,a5,a6,a7,a8,a9,a10,a11,a12,a13, a14,a15,a16,a17,a18,a19,a20,a21,a22}
   \filldraw [black] (\p) circle (1.5pt);
  \foreach \x in {5pt,15pt,25pt,45pt,75pt,105pt,115pt,135pt,165pt}{
   \draw (\x,-10pt) node {$U$};
  }
  \foreach \x in {35pt,55pt,65pt,85pt,95pt,125pt,195pt,205pt,215pt}{
   \draw (\x,-10pt) node {$D$};
  }
  \foreach \x in {175pt,185pt}{
   \draw (\x,-10pt) node {$H$};
  }
  \foreach \x in {145pt}{
   \draw[blue] (\x,-10pt) node {\bf\emph{U}};
  }
  \foreach \x in {155pt}{
   \draw[blue] (\x,-10pt) node {\bf\emph{D}};
  }
  \foreach \p in {a5,a8,a15}{
   \filldraw [red] (\p) circle (2.5pt);
   \draw (\p) circle (2.5pt);
  }
 \end{scope}
 \begin{scope}[xshift=0, yshift=-130]
  \draw[->,dashed] (0pt,0pt) -- (0,45pt);
  \draw[->,dashed] (0pt,0pt) -- (230pt,0pt);
  \draw (10pt,40pt) node {e)};
  \coordinate (a0) at (0pt,0pt);
  \coordinate (a1) at (10pt,10pt);
  \coordinate (a2) at (20pt,20pt);
  \coordinate (a3) at (30pt,30pt);
  \coordinate (a4) at (40pt,20pt);
  \coordinate (a5) at (50pt,30pt);
  \coordinate (a6) at (60pt,20pt);
  \coordinate (a7) at (70pt,10pt);
  \coordinate (a8) at (80pt,20pt);
  \coordinate (a9) at (90pt,10pt);
  \coordinate (a10) at (100pt,0pt);
  \coordinate (a11) at (110pt,10pt);
  \coordinate (a12) at (120pt,20pt);
  \coordinate (a13) at (130pt,10pt);
  \coordinate (a14) at (140pt,20pt);
  \coordinate (a15) at (150pt,30pt);
  \coordinate (a16) at (160pt,20pt);
  \coordinate (a17) at (170pt,30pt);
  \coordinate (a18) at (180pt,40pt);
  \coordinate (a19) at (190pt,30pt);
  \coordinate (a20) at (200pt,20pt);
  \coordinate (a21) at (210pt,10pt);
  \coordinate (a22) at (220pt,0pt);
  \draw (a0) -- (a1) -- (a2) -- (a3) -- (a4) -- (a5) -- (a6) -- (a7) -- (a8) -- (a9) -- (a10) -- (a11) -- (a12) -- (a13) -- (a14) -- (a15) -- (a16) -- (a17) -- (a18) -- (a19) -- (a20) -- (a21) -- (a22);
  \draw[blue, line width=1.5pt] (a17) -- (a18) -- (a19);
  \foreach \p in {a0,a1,a2,a3,a4,a5,a6,a7,a8,a9,a10,a11,a12,a13, a14,a15,a16,a17,a18,a19,a20,a21,a22}
   \filldraw [black] (\p) circle (1.5pt);
  \foreach \x in {5pt,15pt,25pt,45pt,75pt,105pt,115pt,135pt,145pt,165pt}{
   \draw (\x,-10pt) node {$U$};
  }
  \foreach \x in {35pt,55pt,65pt,85pt,95pt,125pt,155pt,195pt,205pt,215pt}{
   \draw (\x,-10pt) node {$D$};
  }
  \foreach \x in {175pt}{
   \draw[blue] (\x,-10pt) node {\bf\emph{U}};
  }
  \foreach \x in {185pt}{
   \draw[blue] (\x,-10pt) node {\bf\emph{D}};
  }
  \foreach \p in {a18}{
   \filldraw [red] (\p) circle (2.5pt);
   \draw (\p) circle (2.5pt);
  }
 \end{scope}
 \begin{scope}[xshift=250, yshift=-130]
  \draw[->,dashed] (0pt,0pt) -- (0,45pt);
  \draw[->,dashed] (0pt,0pt) -- (230pt,0pt);
  \draw (10pt,40pt) node {f)};
  \coordinate (a0) at (0pt,0pt);
  \coordinate (a1) at (10pt,10pt);
  \coordinate (a2) at (20pt,20pt);
  \coordinate (a3) at (30pt,30pt);
  \coordinate (a4) at (40pt,20pt);
  \coordinate (a5) at (50pt,30pt);
  \coordinate (a6) at (60pt,20pt);
  \coordinate (a7) at (70pt,10pt);
  \coordinate (a8) at (80pt,20pt);
  \coordinate (a9) at (90pt,10pt);
  \coordinate (a10) at (100pt,0pt);
  \coordinate (a11) at (110pt,10pt);
  \coordinate (a12) at (120pt,20pt);
  \coordinate (a13) at (130pt,10pt);
  \coordinate (a14) at (140pt,20pt);
  \coordinate (a15) at (150pt,30pt);
  \coordinate (a16) at (160pt,20pt);
  \coordinate (a17) at (170pt,30pt);
  \coordinate (a18) at (180pt,40pt);
  \coordinate (a19) at (190pt,30pt);
  \coordinate (a20) at (200pt,20pt);
  \coordinate (a21) at (210pt,10pt);
  \coordinate (a22) at (220pt,0pt);
  \draw (a0) -- (a1) -- (a2) -- (a3) -- (a4) -- (a5) -- (a6) -- (a7) -- (a8) -- (a9) -- (a10) -- (a11) -- (a12) -- (a13) -- (a14) -- (a15) -- (a16) -- (a17) -- (a18) -- (a19) -- (a20) -- (a21) -- (a22);
  \foreach \p in {a0,a1,a2,a3,a4,a5,a6,a7,a8,a9,a10,a11,a12,a13, a14,a15,a16,a17,a18,a19,a20,a21,a22}
   \filldraw [black] (\p) circle (1.5pt);
  \foreach \x in {5pt,15pt,25pt,45pt,75pt,105pt,115pt,135pt,145pt,165pt,175pt}{
   \draw (\x,-10pt) node {$U$};
  }
  \foreach \x in {35pt,55pt,65pt,85pt,95pt,125pt,155pt,185pt,195pt,205pt,215pt}{
   \draw (\x,-10pt) node {$D$};
  }
 \end{scope}
\end{tikzpicture}
 \caption{Bijection. Inserted up steps and down steps are marked blue, appearing peaks are marked red.\quad a)~Initial Motzkin path.\quad b) The path after the preliminary stage.\quad c) The path after the first level iterations.\quad d) The path after the second level iterations.\quad e) The path after the third level iterations.\quad f)~Resulting Dyck path.}
\label{fig:bijection_algorithm}
\end{figure}

  To verify that the described algorithm realizes a bijection, it suffices to check the following two conditions.
  First, we have to show that the output of our algorithm is a Dyck path with $k+1$ peaks indeed.
  This is because the number of iterations coincides with the number of up steps in the path obtained after the preliminary stage, and each iteration produces exactly one peak.
  Second, we have to show that two different Motzkin paths at the input lead to two different Dyck paths at the output.
  This follows from the fact that a pair $(x,y)$ of a given parity is uniquely determined by the corresponding pair $(u,v)$.
  Therefore, taking into account Lemma~\ref{lem:pre_bijection}, we conclude that our algorithm provides a bijection and the number of iterations if equal to $(k+1)$.
\end{proof}

\section{Brick walk enumeration}
\label{sec:brick_walks_enumeration}

  In this section, we apply Lemma~\ref{lem:pre_bijection} to the lattice path enumeration.
  In the proof of Theorem~\ref{thm:d=4_interpretation}, we have seen how this lemma is applied to count excursions on the graph~$\G_2(1)$.
  At the same time, the presented method works in a larger setting where $\G_2(1)$ is seen as a particular example of a cone on the graph $\G_2(0)$.
  Therefore, we consider here various cones of $\G_2(0)$, including half-planes and quarter-planes (Figure~\ref{fig:cones_of_the_brick_lattice}).
  For completeness, we also discuss the cases where Lemma~\ref{lem:pre_bijection} is not needed to obtain a result.

\begin{figure}[ht!]
\centering
\begin{tikzpicture}[scale = 0.6]
 \begin{scope}[xshift = -200]
  \draw (-4,2) node {a)};
  \clip (-3.5,-2.5) rectangle (3.5,2.5);
  \foreach \x in {-5,...,5}
   \draw (\x,-5) -- ++(0,10);
  \foreach \x in {-2,...,1}{
   \foreach \y in {-2,...,1}{
    \draw (2*\x,2*\y) -- ++(1,0);
    \draw (2*\x+1,2*\y+1) -- ++(1,0);
    \filldraw (2*\x,2*\y) circle (2.5pt);
    \filldraw (2*\x+1,2*\y) circle (2.5pt);
    \filldraw (2*\x+1,2*\y+1) circle (2.5pt);
    \filldraw (2*\x+2,2*\y+1) circle (2.5pt);
   }
  }
  \filldraw[white] (0,0,0) circle (5pt);
  \draw[line width=1.5pt] (0,0,0) circle (5pt);
 \end{scope}
 \begin{scope}[xshift = 0]
  \draw (-1,2) node {b)};
  \clip (-0.5,-2.5) rectangle (6.5,2.5);
  \foreach \x in {0,...,7}
   \draw (\x,-5) -- ++(0,10);
  \foreach \x in {0,...,3}{
   \foreach \y in {-2,...,2}{
    \draw (2*\x,2*\y) -- ++(1,0);
    \draw (2*\x+1,2*\y+1) -- ++(1,0);
    \filldraw (2*\x,2*\y) circle (2.5pt);
    \filldraw (2*\x+1,2*\y) circle (2.5pt);
    \filldraw (2*\x+1,2*\y+1) circle (2.5pt);
    \filldraw (2*\x+2,2*\y+1) circle (2.5pt);
   }
  }
  \foreach \y in {-1,...,2}{
   \filldraw (0,2*\y-1) circle (2.5pt);
  }
  \filldraw[white] (0,0,0) circle (5pt);
  \draw[line width=1.5pt] (0,0,0) circle (5pt);
 \end{scope}
 \begin{scope}[xshift=285]
  \draw (-1,2) node {c)};
  \clip (-0.5,-2.5) rectangle (6.5,2.5);
  \foreach \x in {0,...,7}
   \draw (\x,-5) -- ++(0,10);
  \foreach \x in {0,...,3}{
   \foreach \y in {-2,...,2}{
    \draw (2*\x,2*\y+1) -- ++(1,0);
    \draw (2*\x+1,2*\y) -- ++(1,0);
    \filldraw (2*\x,2*\y+1) circle (2.5pt);
    \filldraw (2*\x+1,2*\y+1) circle (2.5pt);
    \filldraw (2*\x+1,2*\y) circle (2.5pt);
    \filldraw (2*\x+2,2*\y) circle (2.5pt);
   }
  }
  \foreach \y in {-2,...,2}{
   \filldraw (0,2*\y) circle (2.5pt);
  }
  \filldraw[white] (0,0,0) circle (5pt);
  \draw[line width=1.5pt] (0,0,0) circle (5pt);
 \end{scope}
 \begin{scope}[xshift=-200, yshift=-250]
  \draw (-4,4) node {d)};
  \clip (-3.5,-0.5) rectangle (3.5,4.5);
  \foreach \x in {-5,...,5}
   \draw (\x,0) -- ++(0,5);
  \foreach \x in {-2,...,1}{
   \foreach \y in {0,...,2}{
    \draw (2*\x,2*\y) -- ++(1,0);
    \draw (2*\x+1,2*\y+1) -- ++(1,0);
    \filldraw (2*\x,2*\y) circle (2.5pt);
    \filldraw (2*\x+1,2*\y) circle (2.5pt);
    \filldraw (2*\x+1,2*\y+1) circle (2.5pt);
    \filldraw (2*\x+2,2*\y+1) circle (2.5pt);
   }
  }
  \filldraw[white] (0,0,0) circle (5pt);
  \draw[line width=1.5pt] (0,0,0) circle (5pt);
 \end{scope}
 \begin{scope}[yshift=-250]
  \draw (-1,4) node {e)};
  \clip (-0.5,-0.5) rectangle (6.5,4.5);
  \foreach \x in {0,...,7}
   \draw (\x,0) -- ++(0,5);
  \foreach \x in {0,...,3}{
   \foreach \y in {0,...,2}{
    \draw (2*\x,2*\y) -- ++(1,0);
    \draw (2*\x+1,2*\y+1) -- ++(1,0);
    \filldraw (2*\x,2*\y) circle (2.5pt);
    \filldraw (2*\x+1,2*\y) circle (2.5pt);
    \filldraw (2*\x+1,2*\y+1) circle (2.5pt);
    \filldraw (2*\x+2,2*\y+1) circle (2.5pt);
   }
  }
  \foreach \y in {0,...,2}{
   \filldraw (0,2*\y+1) circle (2.5pt);
  }
  \filldraw[white] (0,0,0) circle (5pt);
  \draw[line width=1.5pt] (0,0,0) circle (5pt);
 \end{scope}
 \begin{scope}[xshift=285, yshift=-250]
  \draw (-1,4) node {f)};
  \clip (-0.5,-0.5) rectangle (6.5,4.5);
  \foreach \x in {0,...,7}
   \draw (\x,0) -- ++(0,5);
  \foreach \x in {0,...,3}{
   \foreach \y in {0,...,2}{
    \draw (2*\x,2*\y+1) -- ++(1,0);
    \draw (2*\x+1,2*\y) -- ++(1,0);
    \filldraw (2*\x,2*\y+1) circle (2.5pt);
    \filldraw (2*\x+1,2*\y+1) circle (2.5pt);
    \filldraw (2*\x+1,2*\y) circle (2.5pt);
    \filldraw (2*\x+2,2*\y) circle (2.5pt);
   }
  }
  \foreach \y in {0,...,2}{
   \filldraw (0,2*\y) circle (2.5pt);
  }
  \filldraw[white] (0,0,0) circle (5pt);
  \draw[line width=1.5pt] (0,0,0) circle (5pt);
 \end{scope}
\end{tikzpicture}
 \caption{Brick lattice, different settings: a)~plane (graph $\G_2(0)$); b)~vertical half-plane; c)~reflected vertical half-plane; d)~horizontal half-plane; e)~quarter-plane; f)~reflected quarter-plane (graph $\G_2(1)$).}
\label{fig:cones_of_the_brick_lattice}
\end{figure}

\subsubsection*{Plane and horizontal half-plane}

  Let $h^{pl}_{i,j,n}$ be the number of paths on the graph $\G_2(0)$ depicted in Figure~\ref{fig:cones_of_the_brick_lattice}~a)
  that start at the origin, end at $(i,j)$, and consist of $n$ steps.
  We have seen in the proof of Corollary~\ref{cor:d=2_matrix_form} (see also~\cite{Vidakovic1994})
  that the number $h^{pl}_{0,0,2n}$ of paths returning to the origin after $2n$ steps satisfies
  \[
    h^{pl}_{0,0,2n}
      =
    \sum\limits_{k=0}^n
      \binom{2k}{k}\binom{n}{k}^2
    \, .
  \]
  Similarly, in the general case, one can establish the following relations that emphasize the dependence on the parity of the parameters:
  \[
    h^{pl}_{2i,2j,2n}
      =
    \sum\limits_{k=0}^n
      \binom{2k}{k+j}\binom{n}{k+i}\binom{n}{k-i}
    \, ,
	\qquad
    h^{pl}_{2i+1,2j+1,2n}
      =
    \sum\limits_{k=0}^n
      \binom{2k+1}{k+j+1}\binom{n}{k+i+1}\binom{n}{k-i}
    \, ,
  \]
  \[
    h^{pl}_{2i+1,2j,2n+1}
      =
    \sum\limits_{k=0}^n
      \binom{2k}{k+j}\binom{n}{k+i}\binom{n+1}{k-i}
    \, ,
	\qquad
    h^{pl}_{2i,2j+1,2n+1}
      =
    \sum\limits_{k=0}^n
      \binom{2k+1}{k+j+1}\binom{n}{k+i}\binom{n+1}{k-i+1}
    \, .
  \]
  Note that $h^{pl}_{i,j,n}=0$ if the sum of the indices $(i+j+n)$ is odd.

  In the case where we are interested in enumerating paths in the horizontal half-plane depicted in Figure~\ref{fig:cones_of_the_brick_lattice}~d),
  the corresponding number $h^{hhp}_{i,j,n}$ of paths can be obtained with the help of the reflection principle (see, for example, \cite[Chapter~10]{Bona2015}):
  \[
    h^{hhp}_{i,j,n} = h^{pl}_{i,j,n} - h^{pl}_{i,j+2,n}
    \, .
  \]

\subsubsection*{Vertical half-planes}

  Now, consider the vertical half-plane depicted in Figure~\ref{fig:cones_of_the_brick_lattice}~b) and the corresponding number of paths $h^{vhp}_{i,j,n}$ of length $n$ that start at the origin and end at the vertex $(i,j)$.
  In this case, the reflection principle does not work, so other methods are needed.
  For $i=0$, we can use Lemma~\ref{lem:pre_bijection}.
  For simplicity, assume also that $j=0$.
  As in the proof of Theorem~\ref{thm:d=4_interpretation}, we encode paths by words over the alphabet $\{U,D,R,L\}$,
  and if $k$ is the number of $U$'s (and $D$'s), then, due to Lemma~\ref{lem:pre_bijection}, the number of ways to distribute $R$'s and $L$'s in a word is $\Nar[n+1][n-k+1]=\Nar[n+1][k+1]$.
  Filling the remaining positions in the word with $U$'s and $D$'s, and taking the sum over all $k\leqslant n$, we obtain the following identity:
  \[
    h^{vhp}_{0,0,2n}
      =
    \sum\limits_{k=0}^n
      \binom{2k}{k}N(n+1,k+1)
    \, .
  \]
  More generally, the same approach allows us to obtain the following relations:
  \[
    h^{vhp}_{0,2j,2n}
      =
    \sum\limits_{k=0}^n
      \binom{2k}{k+j}N(n+1,k+1)
    \, ,
	\qquad
    h^{vhp}_{0,2j+1,2n+1}
      =
    \sum\limits_{k=0}^n
      \binom{2k+1}{k+j+1}N(n+1,k+1)
    \, .
  \]

  For the reflected vertical half-plane shown in Figure~\ref{fig:cones_of_the_brick_lattice}~c), the total number $h^{rvhp}_{i,j,k}$ of paths of length $k$ that go from the origin to the vertex $(i,j)$ can be obtained from what we have already done with the help of the relation
  \[
    h^{rvhp}_{i,j,k}
      =
    h^{vhp}_{i,j-1,k-1} + h^{vhp}_{i,j+1,k-1}
    \, .
  \]
  This relation gives us the following formulas:
  \[
    h^{rvhp}_{0,2j,0} = \delta_{j0}
    \, ,
	\quad
    h^{rvhp}_{0,2j,2n+2}
      =
    \sum\limits_{k=0}^n
      \binom{2k+2}{k+j+1}N(n+1,k+1)
    \, ,
	\quad
    h^{rvhp}_{0,2j+1,2n+1}
      =
    \sum\limits_{k=0}^n
      \binom{2k+1}{k+j+1}N(n+1,k+1)
    \, ,
  \]
  where $\delta_{ij}$ is the \emph{Kronecker delta}, that is, $\delta_{ij}=1$ for $i=j$ and $\delta_{ij}=0$ for $i\neq j$. 
  In particular, we obtain the identity $h^{rvhp}_{0,2j+1,2n+1} = h^{vhp}_{0,2j+1,2n+1}$, which can be interpreted as a path inversion.

%  By applying the kernel method, we can prove that the generating function of paths on the vertical half-plane (Figure~\ref{fig:cones_of_the_brick_lattice}) is
%  \[
%    H^{vhp}(x,y,t)
%      =
%    \sum\limits_{i=0}^\infty
%      \sum\limits_{j=-\infty}^\infty
%        \sum\limits_{k=0}^\infty
%          h^{vhp}_{i,j,k} x^i y^j t^k
%  \]
%  satisfies the following relation:
%  \[
%    H^{vhp}(x,y,t)
%      =
%    \dfrac
%    {
%      \Big(1 + t(x+Y)\Big)
%      \Big(
%        -1 + t^2(1+Y^2+2xY) +
%        \sqrt{1 - 2t^2(1+Y^2) + t^4(1-Y^2)^2}
%      \Big)
%    }
%    {2t^2xY \Big(x - t^2(x+Y)(1+xY)\Big)}
%    \, ,
%  \]
%  where $Y = y + y^{-1}$.
%  In particular, the relation
%  \[
%    H^{vhp}(0,y,t)
%      =
%    \sum\limits_{j=-\infty}^\infty
%      \sum\limits_{k=0}^\infty
%        h^{vhp}_{0,j,k}
%      =
%	\dfrac{1 + tY}{1 - t^2(1+Y^2)}
%	  \cdot
%	C\left(\dfrac{t^2Y}{1 - t^2(1+Y^2)}\right)
%    \, ,
%  \]
%  where $C(z)$ is the generating function for the Catalan numbers,
%  can be easily explained combinatorially in terms of brick walks ending on the ordinate axis.

\subsubsection*{Quarter-planes}

  Finally, consider the numbers $h^{qp}_{i,j,n}$ and $h^{rqp}_{i,j,n}$ of the paths on the quarter-planes depicted in Figure~\ref{fig:cones_of_the_brick_lattice}~e) and~f), respectively.
  In these cases, the enumeration can be obtained both by the reflection principle,
  \[
    h^{qp}_{i,j,n} = h^{vhp}_{i,j,n} - h^{vhp}_{i,j+2,n}
    	\qquad\mbox{and}\qquad
    h^{rqp}_{i,j,n} = h^{rvhp}_{i,j,n} - h^{rvhp}_{i,j+2,n}
    \, ,
  \]
  and by Lemma~\ref{lem:pre_bijection}.
  In the case of the quarter-plane, this leads to the relations
  \[
    h^{qp}_{0,2j,2n}
      =
    \sum\limits_{k=0}^n
      \dfrac{2j+1}{k+j+1}\binom{2k}{k+j}N(n+1,k+1)
    \, ,
	\qquad
    h^{qp}_{0,2j+1,2n+1}
      =
    \sum\limits_{k=0}^n
      \dfrac{2j+2}{k+j+2}\binom{2k+1}{k+j+1}N(n+1,k+1)
    \, .
  \]
  For the reflected quarter plane, that is, for the graph $\G_2(1)$, we have
  \[
    h^{rqp}_{0,2j,0} = \delta_{j0}
    \, ,
	\qquad
    h^{rqp}_{0,2j,2n+2}
      =
    \sum\limits_{k=0}^n
      \dfrac{2j+1}{k+j+2}\binom{2k+2}{k+j+1}N(n+1,k+1)
    \, ,
  \]
  \[
    h^{rqp}_{0,2j+1,2n+1}
      =
    \sum\limits_{k=0}^n
      \dfrac{2j+2}{k+j+2}\binom{2k+1}{k+j+1}N(n+1,k+1)
      =
    h^{qp}_{0,2j+1,2n+1}
    \, .
  \]
  In particular, for $n\geqslant0$ we obtain the result that we have already seen in Theorem~\ref{thm:d=4_interpretation}:
  \begin{equation}\label{eq:h^(rqp)}
    h^{rqp}_{0,0,2n+2}
      =
    \sum\limits_{k=0}^n
      C_{k+1}\Nar[n+1][k+1]
    \, .
  \end{equation}

\section{Conclusion}
\label{sec:conclusion}

  We have seen in Section~\ref{sec:introduction} that Theorem~\ref{thm:d=2_interpretation} admits a direct explanation that comes from the interpretation of the space $\mathbb{R}^2$ as a complex plane $\mathbb{C}$.
  At the same time, our proof of Theorem~\ref{thm:d=4_interpretation} is not direct: it relies not on the inner structure of random walks in $\mathbb{R}^4$, but on the known representation of moments $W_{m+1}(\nu;2n)$ in matrix form~\eqref{eq:A(nu)matrix} (the fact that comes from computing moments as integrals).
  This raises the following natural question: is it possible to obtain Theorem~\ref{thm:d=4_interpretation} directly, possibly by interpreting the space $\mathbb{R}^4$ as quaternions $\mathbb{H}$?
  Note that some of the arguments that led to Proposition~\ref{prop:d=2_pre-interpretation} still hold in this case.
  Thus, we can represent moments in the form
  \[
    W_{m+1}(1;2n)
     =
    \mathbb{E}\left(
      \Big(
        \big(A_0^{{\color{white}1}} + \ldots + A_m^{{\color{white}1}}\big)
        \big(A_0^{-1} + \ldots + A_m^{-1}\big)
      \Big)^{n}
    \right)
%    =
%    \mathbb{E}\Big(
%      (A_0^{{\color{white}1}} + \ldots + A_m^{{\color{white}1}})^{n}
%      (A_0^{-1} + \ldots + A_m^{-1})^{n}
%    \Big)
    \, ,
  \]
  where $A_k\in\mathbb{H}$ are random unit quaternions taken independently and uniformly.
  On the other hand, the evaluation of the expected value of a monomial $A=A_{i_1}^{{\color{white}1}}A_{j_1}^{-1}A_{i_2}^{{\color{white}1}}A_{j_2}^{-1}\ldots A_{i_n}^{{\color{white}1}}A_{j_n}^{-1}$ is much more complicated, since quaternions are not commutative.
  In particular, the identity $\mathbb{E}(A)\neq0$ no longer leads to the fact that $A=1$.
  For example, $\mathbb{E}(A_1^{{\color{white}1}}A_2^{-1}A_1^{{\color{white}1}}A_2^{-1})=-1/2.$

  Another question concerns higher dimensions.
  It is worth mentioning that the situation differs significantly for even and odd dimensions, so we discuss these two cases separately.
  For even dimensions $d=2(\nu+1)$ with $\nu\in\mathbb{Z}_{>1}$, it looks like the entries of the matrix $A(\nu)$ given by~\eqref{eq:A(nu)matrix} become integers after multiplication on $\binom{2\nu-1}{2}$.
  For instance, in the case where $\nu=2$, the sequence
  \[
    W_2(2;2n) = 6 \frac{(2n)!}{n!(n+2)!}
  \]
  is known as the \emph{super ballot numbers} (see~\cite{Gessel1992} and the sequence \href{https://oeis.org/A007054}{\texttt{A007054}} in~\cite{oeis}).
  For a fixed $n\in\mathbb{Z}_{\geqslant0}$, Gessel and Xin \cite{GesselXin2005} interpreted the right-hand side as the number of pairs of Dyck paths of total length $2n$ whose height difference is at most~$1$.
  This pair can also be seen as a single Dyck path of length $2n$ in which a zero-contact (that is, a point where the path touches the $x$-axis) is marked and the heights of the Dyck subpaths to the left and right of this point differ by at most~$1$.
  In general, the question is the following: is it true that the values $\binom{2\nu-1}{2}W_m(\nu;2n)$ are integers?
  If so, what could be a combinatorial interpretation of these values?
  If not, is there a multiplier that, when applied to any moment $W_m(\nu;2n)$ for a given $\nu$, turns it into an integer?

  For odd dimensions, the values are also rational, but finding a combinatorial interpretation looks much more complicated.
  Although the probability density functions $p_m(\nu;x)$ are piecewise polynomials that have an explicit form and can be computed one by one~\cite{Garcia-Pelayo2012} or with an explicit relation demanding the extraction of coefficients~\cite{BorweinSinnamon2016}, the structure of the moments remains hidden.
  For instance, we have
  \[
    W_4(1/2;s) = \dfrac{2^{s+3}(1-2^{s+2})}{(s+2)(s+3)(s+4)}
  \]
  and
  \[
    W_4(3/2;s) = \dfrac{(12)^{3}2^{s+1}\big(s^3+27s^2+230s+616+642^{s}(s^3+15s^2+62s+56)\big)}{(s+2)(s+4)(s+5)(s+6)(s+7)(s+8)(s+9)(s+10)(s+12)}
    \, ,
  \]
  see~\cite[Example~2.13]{BorweinStraubVignat2016}.
  In particular, from these expressions it seems unlikely that there is a common multiplier that would allow all the moments to be converted into integers.

  Finally, it is worth considering whether Lemma~\ref{lem:pre_bijection} can be adapted to evaluate $|\hat{\P}_{n,r,\ell}|$ for $r>\ell$.
  If the answer is yes, we could expect that the prefixes of Dyck paths will be involved in the bijection analogous to Proposition~\ref{prop:bijection}.
  This would allow us, in particular, to obtain closed formulas for $h^{vhp}_{i,j,k}$ for any non-negative integers $i$, $j$, and $k$.

%  \underline{Question.}
%  Can we say something about Hankel-total positivity of polynomials that come from matrices $A(\nu)^{m-1}$?
% Meaning that, instead of taking row sums, we could take weighted sums (elements are multiplied by $x^j$).
% Would these polynomials be Hankel-total positive for some $m$?

\section{Acknowledgements}
\label{sec:acknowledgements}

  We are very grateful to Andrew Elvey Price for his valuable comments, which showed us an elegant way to prove Proposition~\ref{prop:d=2_pre-interpretation}.
  We thank Jean-Luc Baril for the inspiration that sparked this research.
  We also thank Dmitry Mineev, Andrey Ryabichev, and everyone who participated in discussions at seminars and conferences for the fruitful exchanges.

  Sergey Kirgizov and Khaydar Nurligareev were partially supported by the French National Agency for Research (\textsc{anr}) project \textsc{PICS} ANR-22-CE48-0002.
  Michael Wallner was partially supported by the Austrian Science Fund (\textsc{fwf}) AST1535024.
  Khaydar Nurligareev and Michael Wallner were also partially supported by the \textsc{anr-fwf} project \textsc{PAnDAG} ANR-23-CE48-0014-01.

\bibliographystyle{abbrv}
\bibliography{Brick_wall}

\end{document}